\documentclass{dmgt}

\usepackage{mathtools}
\usepackage{algorithm}
\usepackage{algpseudocode}
\usepackage{comment}

%% do not change the next command
%% \setarticle{1}{xx}{yyyy}

\newauthor{%
P\v remysl Holub}{%
P. Holub}{%
Department of Mathematics\\
University of West Bohemia\\
Pilsen, Czech Republic}[%
holubpre@kma.zcu.cz]

\newauthor{%
Martin Kop\v riva}{%
M. Kop\v riva}{%
Department of Mathematics\\
University of West Bohemia\\
Pilsen, Czech Republic}[%
mkopriva@kma.zcu.cz]

\title{$L(3,2,1)$-labelings of three classes of $4$-valent circulants}%[Short Article Title]

\keywords{$L(p,q)$-labeling; $L(3,2,1)$-labeling; circulant graph}

\classnbr{05C12, 05C15, 05C78}

\newcommand{\diam}{\operatorname{diam}}
\newcommand{\dist}{\operatorname{dist}}

\newtheorem{proposition}[theorem]{Proposition}
\newtheorem{observ}[theorem]{Observation}
\newtheorem{conjecture}[theorem]{Conjecture}
\newtheorem{corollary}[theorem]{Corollary}

\begin{document}

\begin{abstract}
An $L(3,2,1)$-labeling of a graph $G$ is an assignment $f$ of nonnegative integers to vertices such that $\vert f(x)-f(y)\vert > 3-\mbox{dist}_G(x,y)$ for every pair $x,y$ of vertices of $G$, where $\mbox{dist}_G(x,y)$ denotes the distance between $x$ and $y$ in $G$. The minimum span (i.e., the difference between the largest and the smallest value) among all $L(3,2,1)$-labelings of $G$ is denoted by $\lambda_{(3,2,1)}(G)$. In this paper, we study $L(3,2,1)$-labelings of three classes of circulant graphs. Namely, we investigate $\lambda_{(3,2,1)}$ of circulant graphs $C_n(1,t)$, where $t\in\{3,4,5\}$ and $n$ is the order of the graph. %This paper complements results proved in "On $L(3,2,1)$-labelings of circulant graphs" \cite{HoKo} by the same two authors. 
This paper is a continuation of a recent publications of V. Bianco and T. Calamoneri who studied the square of cycles, i.e., circulant graphs $C_n(1,2)$.
\end{abstract}

\section{Introduction}

The Frequency (or Channel) assignment problem (FAP) has been a motivation for various types of labelings and colourings of graphs (see e.g. \cite{Hale}). Among others, a well-known $L(2,1)$-labeling of graphs was introduced by Griggs and Yeh \cite{GY}. A generalization of this, a problem of $L(p,q)$-labeling of graphs
arose: Given a graph $G$, label vertices of $G$ with nonnegative integers in such a way that labels of adjacent vertices must differ by at least $p$ while vertices at distance $2$ must differ by at least $q$. Naturally, the question is to minimize the span, i.e. the difference between the maximum and the minimum value used for such a labeling of $G$. In the sense of FAP, vertices at distance 1 represent transmitters which are "very close" to each other while vertices at distance 2 represent transmitters which are just "close". The minimum span over all such labelings of $G$ is called the labeling number and is denoted by $\lambda_{(p,q)}(G)$. In the last 30 years, various results on variations of the $L(p,q)$-labeling of graphs have been published, see e.g. survey papers \cite{Cal survey,Yeh survey} and references therein.

Chartrand et al. \cite{CEZH} generalized the concept of an $L(2,1)$-labeling to the concept of a radio labeling in the following way. For a positive integer $k$, a $k$-radio labeling of $G$ is a function  $f:V(G)\rightarrow \mathbb{N}\cup\{0\}$ such that $\vert f(x)-f(y)\vert>k-\dist_G(x,y)$  for each pair of vertices $x,y$ of $G$. Analogously, we ask for such a labeling with a minimum span, which is called the $k$-radio labeling number and is denoted by $rl_k(G)$. The concept of the $k$-radio labeling appears to be difficult in general, only some results for special classes of graphs have been published during last 20 years (see e.g. \cite{CEHT, KSV22} and references therein). A natural extension of the notion of $L(2,1)$-labeling of graphs is a labeling which is based on constraints not only for labels of adjacent vertices and vertices at distance two apart, but also for vertices which are at distance three apart. In this sense, transmitters in the FAP are considered to be "very close", "close" and "relatively close". Recently, some results on $k$-radio labeling for $k=3$ have been presented, see e.g. \cite{Cal,CKL11,clipperton,DGN17,ZH24}. 
Note that, for $k=1$, a $k$-radio labeling becomes the proper vertex colouring, and for $k=2$, we obtain the $L(2,1)$-labeling. In this direction, a $3$-radio labeling is usually called an $L(3,2,1)$-labeling.

In this paper we consider simple undirected graphs only. For definitions and notations not defined here we refer to \cite{BM}. Let $G$ be a graph with vertex set $V(G)$ and edge set $E(G)$. The {\em complement} of $G$, denoted by $\bar G$, is the graph with $V(\bar G)=V(G)$ in which vertices $x$ and $y$ are adjacent whenever $x$ and $y$ are not adjacent in $G$. A {\em subgraph} of $G$ induced by a set $X\subset V(G)$ will be denoted by ${\langle X \rangle}_G$. For $x,y\in V(G)$, $\dist_G(x,y)$ denotes the {\em distance} between $x$ and $y$ in $G$, i.e. the length of a shortest path between $x$ and $y$. The {\em diameter} of $G$, denoted by $\diam(G)$, is the largest distance between any two vertices of $G$. By symbol $P_n$ we mean a path on $n$ vertices. The {\em Cartesian product} of two graphs $G_1=(V(G_1), E(G_1))$ and $G_2=(V(G_2), E(G_2))$, denoted by $G_1 \square G_2$, is the graph $G=(V(G), E(G))$, where $V(G)=V(G_1)\times V(G_2)$ and two vertices $(x_1, y_1)$ and $(x_2,y_2)$ are adjacent if and only if $x_1=x_2$ and $y_1y_2 \in E(G_2)$ or $y_1=y_2$ and $x_1x_2 \in E(G_1)$. By symbol $[k]$, where $k \in \mathbb{N}$, we mean the set $\{1,2,\dots,k\}$. For a set $A$ of nonnegative integers, let $\bar A$ denote the complement of $A$ in $\mathbb{N}$, i.e., a set of all nonnegative integers which do not belong to $A$.

An {\em $L(3,2,1)$-labeling} of $G$ is a function $f \colon V(G) \to \mathbb{N} \cup \{0\}$ such that, for each pair of vertices $x, y \in V(G)$, $\vert f(x)-f(y)\vert > 3-\dist_G(x,y)$. Hence labels of neighbouring vertices must differ by at least $3$, labels of vertices at distance $2$ must differ by at least two, and labels of vertices at distance $3$ must be different.
The {\em span} of an $L(3,2,1)$-labeling of $G$ is the difference between the largest and the smallest value. The minimum span among all $L(3,2,1)$-labelings of $G$ is denoted by $\lambda_{(3,2,1)}(G)$. Since the smallest used value is $0,$ the largest one is $\lambda_{(3,2,1)}(G)$ – the {\em labeling number}. We say that the labeling of $G$ is {\em optimal}, if there is no labeling of $G$ with a smaller span. The notation $f(x_i)=s_i$ means that vertex $x_i$ has label $s_i$. 

The following statement will be used in some proofs through this paper.

\begin{theorem} \cite{CKL11}
\label{podgraf}
Let $H$ be a subgraph of a graph $G$. Then $\lambda_{(3,2,1)}(H) \leq \lambda_{(3,2,1)}(G)$.
\end{theorem}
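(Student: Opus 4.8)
The plan is to show that an optimal $L(3,2,1)$-labeling of $G$, when restricted to the vertices of $H$, is already an $L(3,2,1)$-labeling of $H$ of no larger span. The only structural fact needed is the monotonicity of distances under passing to a subgraph: if $H$ is a subgraph of $G$, then $V(H)\subseteq V(G)$, every path of $H$ is a path of $G$, and hence $\dist_H(x,y)\geq \dist_G(x,y)$ for every pair $x,y\in V(H)$ — with the usual convention $\dist_H(x,y)=\infty$ when $x$ and $y$ lie in different components of $H$, in which case the inequality is trivial.

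First I would fix an optimal $L(3,2,1)$-labeling $f\colon V(G)\to\mathbb{N}\cup\{0\}$ of $G$, so that its span equals $\lambda_{(3,2,1)}(G)$, and set $g=f|_{V(H)}$. For any two vertices $x,y\in V(H)$, the labeling condition for $f$ in $G$ together with the distance monotonicity gives $\vert g(x)-g(y)\vert=\vert f(x)-f(y)\vert>3-\dist_G(x,y)\geq 3-\dist_H(x,y)$, so $g$ satisfies the defining inequality of an $L(3,2,1)$-labeling of $H$. (When $\dist_H(x,y)\geq 3$ the right-hand side $3-\dist_H(x,y)$ is non-positive, so the constraint in $H$ is automatically weaker than — or vacuous compared to — the one already met in $G$; when $\dist_H(x,y)\in\{1,2\}$ it is dominated by the corresponding constraint in $G$.)

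Next I would observe that $g$ takes values in $f(V(H))\subseteq f(V(G))$, so the difference between the largest and smallest value of $g$ is at most that of $f$, i.e.\ at most $\lambda_{(3,2,1)}(G)$. Finally, subtracting $\min_{x\in V(H)}g(x)$ from every label of $g$ yields another $L(3,2,1)$-labeling of $H$ with the same span but with $0$ among its values; hence $\lambda_{(3,2,1)}(H)$ is at most the span of $g$, which is at most $\lambda_{(3,2,1)}(G)$, as claimed.

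There is no genuine obstacle here: the argument is short and self-contained. The only point that deserves a line of care is the distance monotonicity, including the degenerate cases — pairs of vertices of $H$ that are non-adjacent in $H$ or lie in different components of $H$ — for which the relevant labeling constraints in $H$ are weaker than (or vacuous compared to) those already satisfied in $G$, so nothing is lost under restriction.
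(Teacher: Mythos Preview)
Your argument is correct and is the standard proof: restrict an optimal labeling of $G$ to $V(H)$, use $\dist_H(x,y)\geq\dist_G(x,y)$ to verify the $L(3,2,1)$ constraints in $H$, and observe that the span can only decrease. Note, however, that the paper does not supply its own proof of this statement --- it simply quotes the result from \cite{CKL11} --- so there is nothing to compare against beyond confirming that your reasoning is sound, which it is.
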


In this paper we also use the following basic result on $\lambda_{(3,2,1)}$ of cycles.

%\begin{theorem} \cite{clipperton}
%\label{chromaticke_cislo_cest}
%Let $n \in \mathbb{N}$, and $P_n$ be a path on $n$ vertices. Then

%\begin{equation*}
%\lambda_{(3,2,1)}(P_n) =
%\,
%\left\{ \begin{array}{ll}
%0 \quad &\mbox{if} \; n=1, \\
%3 \quad &\mbox{if} \; n=2, \\
%5 \quad &\mbox{if} \; n=3, 4, \\
%6 \quad &\mbox{if} \; n=5, 6, 7, \\
%7 \quad &\mbox{if} \; n \geq 8.
%\end{array}
%\right.
%\
%\end{equation*}
%\end{theorem}

\begin{theorem} \cite{clipperton}
\label{chromaticke_cislo_kruznic}
Let $n \in \mathbb{N}, n \geq 3$, and $C_n$ be a cycle on $n$ vertices. Then
\begin{equation*}
\lambda_{(3,2,1)}(C_n) =
\,
\left\{ \begin{array}{ll}
6 \quad &\mbox{if} \; n=3, \\
7 \quad &\mbox{if} \; n \; \mbox{is even},\\
8 \quad &\mbox{if} \; n \; \mbox{is odd and if } n \neq 3,7,\\
9 \quad &\mbox{if} \; n=7.\\
\end{array}
\right.
\
\end{equation*}
\end{theorem}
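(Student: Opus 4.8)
The plan is to treat the four ranges of $n$ in the formula one at a time, in each case first proving a lower bound and then exhibiting an $L(3,2,1)$-labeling that attains it. The case $n=3$ is immediate, since $C_3=K_3$: all three vertices are pairwise adjacent, so their labels must be pairwise at distance at least $3$, whence $0,3,6$ is optimal and $\lambda_{(3,2,1)}(C_3)=6$.

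For the lower bound $\lambda_{(3,2,1)}(C_n)\ge 7$ when $n\ge 4$ I would argue by contradiction from a labeling $f$ of span at most $6$, so $f(V)\subseteq\{0,\dots,6\}$ (for $n\ge 8$ one could instead quote Theorems~\ref{podgraf} and~\ref{chromaticke_cislo_cest}, as $P_8$ is a subgraph, but a uniform argument is just as short). The key local fact is that in a cycle the two neighbours $u,w$ of a vertex $v$ are at distance $2$ from each other, so, with $a=f(v)$, both $f(u),f(w)$ lie in $N(a):=\{b\in\{0,\dots,6\}:|b-a|\ge 3\}$ and $|f(u)-f(w)|\ge 2$. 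Since $N(2)=\{5,6\}$ and $N(4)=\{0,1\}$ contain no pair differing by $2$, the labels $2,4$ are unused; then $N(1)$ and $N(5)$ restricted to the remaining labels force $1,5$ out too, so $f(V)\subseteq\{0,3,6\}$. Now every vertex is forced to have one neighbour of each of the other two labels, so $f$ must be the $3$-periodic pattern $0,3,6,0,3,6,\dots$; this is inconsistent unless $3\mid n$, and if $3\mid n$ (hence $n\ge 6$) it gives $f(v_0)=f(v_3)$ while $\dist(v_0,v_3)=3$, a contradiction.

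Then I would sharpen this for odd $n$. For $C_5$ all vertex pairs have distance at most $2$, so a span-$7$ labeling would need five labels pairwise differing by at least $2$, impossible inside $\{0,\dots,7\}$; hence $\lambda_{(3,2,1)}(C_5)\ge 8$. For odd $n\ge 9$ I would assume span at most $7$ and exploit the coarse partition of labels into $L=\{0,1,2,3\}$ and $H=\{4,5,6,7\}$: an edge with both endpoints in $L$ must carry the labels $\{0,3\}$ and one with both endpoints in $H$ the labels $\{4,7\}$, so three consecutive $L$-vertices (resp.\ $H$-vertices) would force $0,3,0$ (resp.\ $4,7,4$), contradicting the distance-$2$ condition; thus the maximal $L$- and $H$-arcs along the cycle have length at most $2$. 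Writing down the labels that a length-$2$ arc forces on the two neighbouring vertices and propagating this around $C_n$ yields a forced label-sequence whose admissible cyclic patterns can be enumerated, and none closes up when $n$ is odd; this gives $\lambda_{(3,2,1)}(C_n)\ge 8$. The cycle $C_7$ needs a separate treatment: all $\binom{7}{2}$ vertex pairs are at distance at most $3$, so a span-$8$ labeling would use seven \emph{distinct} values of $\{0,\dots,8\}$, i.e.\ omit exactly two; a finite case check on the omitted pair, using that each value's two cyclic neighbours differ from it by at least $3$ and from each other by at least $2$, rules this out and yields $\lambda_{(3,2,1)}(C_7)\ge 9$.

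For the matching upper bounds I would produce explicit labelings built from short periodic blocks. For even $n$, write $n=4a+6b$ with $a,b\ge 0$ (possible for every even $n\ge 4$) and concatenate cyclically $a$ copies of the block $(0,5,2,7)$ and $b$ copies of $(0,3,6,1,4,7)$; since each block starts with $0$ and ends with $7$, only a handful of ``seam'' configurations arise, and checking them together with the block interiors shows the result is an $L(3,2,1)$-labeling of span $7$. For odd $n\notin\{3,7\}$ the same idea works with span $8$: $(4,0,6,2,8)$ labels $C_5$, and for $n\ge 9$ one concatenates $(4,0,6,2,8)$ — once if $n\equiv 1\pmod 4$, three times if $n\equiv 3\pmod 4$ — with copies of $(0,5,2,7)$, handling $n\in\{9,11\}$ by direct span-$8$ labelings; finally $(0,3,6,9,2,5,8)$ labels $C_7$ with span $9$. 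The constructions and the span-$6$ elimination are routine; the real obstacle is the odd-$n$ lower bounds — the arc-propagation ruling out span $7$ for odd $n\ge 9$, and the finite check ruling out span $8$ for $C_7$ — which I expect to be case-heavy and to form the bulk of the argument.
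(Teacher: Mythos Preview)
The paper does not prove this theorem at all: it is quoted verbatim from \cite{clipperton} and used only as a background result, so there is no in-paper argument to compare your proposal against.

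On its own merits, your outline is sound. The upper-bound constructions work: the blocks $(0,5,2,7)$ and $(0,3,6,1,4,7)$ do concatenate into valid span-$7$ labelings of every even $C_n$ (the four seam types all check out), and the odd-$n$ constructions with the block $(4,0,6,2,8)$ likewise close correctly, with $n=11$ genuinely needing an ad hoc labeling since $11$ is not in $5\mathbb{Z}_{\ge 1}+4\mathbb{Z}_{\ge 0}$ with at most three $5$'s fitting. Your span-$6$ elimination is clean and correct, and the $C_5$ argument is immediate.

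The only real gap is exactly where you flag it: the span-$7$ lower bound for odd $n\ge 9$. The $L/H$ partition and the observation that monochromatic arcs have length at most $2$ are correct and useful, but ``propagating this around $C_n$'' hides all the content. The arc-length sequence alone does not give a parity obstruction (one can have $k$ arcs of each type with arbitrary lengths in $\{1,2\}$ summing to an odd number), so you must track actual labels, and the number of local continuations at each step is not obviously small. This is doable but is the heart of the proof, and as written it is an assertion rather than an argument. The $C_7$ finite check is similarly unproblematic in principle but not carried out.
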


%\begin{theorem} \cite{clipperton}
%\label{chromaticke_cislo_uplneho_grafu}
%Let $n \in \mathbb{N}$, $n\geq 3$, and $K_n$ be a complete graph on $n$ vertices. Then $\lambda_{(3, 2, 1)} (K_n) = 3(n-1)$.
%\end{theorem}

%\begin{theorem} \cite{CKL11}
%\label{veta_produkt_cest}
%Let $n \in \mathbb{N}, n \geq 2$, and let $P_2 \square P_n$ be the Cartesian product of $P_2$ and $P_n$. Then
%\begin{equation*}
%\lambda_{(3,2,1)}(P_2 \square P_n) =
%\,
%\left\{ \begin{array}{ll}
%7 \quad &\mbox{if} \; n=2, \\
%8 \quad &\mbox{if} \; n=3,4, \\
%9 \quad &\mbox{if} \; n \geq 5. \\
%\end{array}
%\right.
%\
%\end{equation*}
%\end{theorem}

Circulant graphs (circulants) can be equivalently defined in various ways. We will use the following definition.
Let $n \geq 3$ be an integer and let $S \subseteq \{1,2,\dots,\lfloor n/2 \rfloor\}$.  
The \emph{circulant graph} $C_n(S)$ is the graph with vertex set $V = \{u_1,u_2,\dots,u_n\}$,
where two vertices $u_i$ and $u_j$ are adjacent if and only if
\begin{equation*}
\vert i - j\vert  \equiv s \pmod{n} \quad \text{or} \quad \vert i - j\vert \equiv n - s \pmod{n}
\end{equation*}
for some $s \in S$ (for a simplification, we consider $u_n$ instead of $u_0$ by the fact that $n\equiv 0 \pmod{n}$). Equivalently, for each vertex $u_i\in V(C_n(S))$, $u_i$ is adjacent to the vertices with indices
\begin{equation*}
i \pm s \pmod{n}, \quad \text{for all } s \in S.
\end{equation*} 

Clearly, if $\vert S\vert = \lfloor{\frac n2}\rfloor$, then $C_n(S)$ is a complete graph, and if $\vert S\vert =1$, then $C_n(S)$ is a cycle or a union of vertex disjoint cycles. %For $S'=\{s_1,s_2,\dots, s_k\}\subset S$ such that $S'=[\lfloor \frac n2\rfloor] \cap \bar S$, it is a well-known fact \cite{BuSa91} that $C_n(S)$ consists of $g=gcd(S', n)$ components, each of them isomorhic to $C_{n/g}(\{i\in [n/g]; \,\, gi\in S\})$. 
For simplicity, we abbreviate the notation $C_n(\{s_1,s_2,\dots, s_k)\}$ to $C_n(s_1,s_2,\dots, s_k)$.
In the rest of the paper we will consider circulant graphs as connected, if not stated otherwise. Some examples of circulant graphs are depicted in Fig. \ref{FIG circ intro}.% For circulant graphs, the following statement is true.

\begin{figure}[ht]
  \centering
  \includegraphics[width=0.8\textwidth]{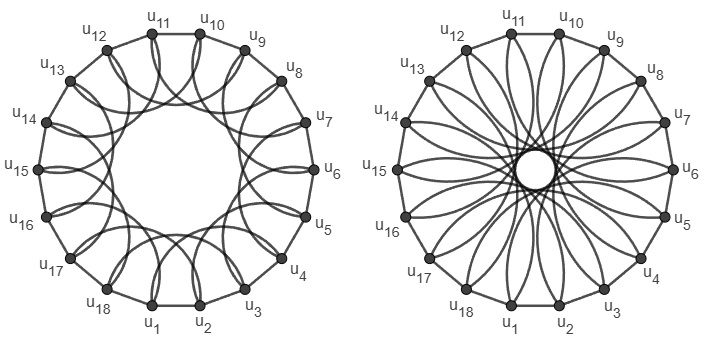}
%  \captionsetup{justification=centering}
  \caption{The circulant graphs $C_{18}(1,3)$ (left) and $C_{18}(1,5)$ (right).}
  \label{FIG circ intro}
\end{figure}

%\begin{theorem} \label{circ bas}
%Let $G=C_n(S)$ be a circulant graph of order $n$. Then

%\begin{enumerate} 

%\item[(1)] \cite{BuSa91} $G$ is connected if and only if $G$ is hamiltonian,
%\item[(2)] \cite{BuSa91} %for $S'=\{s_1,s_2,\dots, s_k\}\subset S$ such that $S'=[\lfloor \frac n2\rfloor] \cap \bar S$, 
%$G$ consists of $g=gcd(S, n)$ components, each of them isomorhic to $C_{n/g}(\{i\in [n/g]; \,\, gi\in S\})$,
%\item[(3)] \cite{Meijer} the complement $\bar G$ of $G$ is also a circulant isomorphic to $C_n(\bar S)$.
%\end{enumerate}
%\end{theorem}

A closely related class to circulants is a class of distance graphs. For a set of positive integers $D$, a {\em distance graph} $G(\mathbb{Z},D)$ is a graph with vertex set $\mathbb{Z}$ in which two vertices $a,b$ are adjacent whenever $\vert a-b\vert \in D$. A lot of results concerning various types of colourings and labelings of circulant and distance graphs has been published in the last decades (see e.g. \cite{CEHT,TaoGu}). For simplicity, we will use the notation
$G(D)=G(\mathbb{Z},D)$, e.g., for $D=\{k,t\}$ we will write $G(k,t)$ instead of $G(\mathbb{Z},\{k,t\})$.

A {\em pattern} of length $k$ is a sequence of $k$ values $f(u_1), \dots, f(u_k)$ corresponding to vertices $u_1, \dots, u_k$. A labeling of a circulant graph $G$ with a pattern of length $k$ is a labeling $f$ of $G$ such that $f(u_{i+k})=f(u_i)$ for each $i=1,\dots,\vert V(G)\vert$ where indices are taken modulo $n$. Analogously, for a distance graph $G$, $f(i+k)=f(i)$ for every $i\in\mathbb{Z}$. In other words, we periodically repeat the pattern $f(u_1), \dots, f(u_k)$ along the whole $G$. In this context we say that the pattern forms a {\em feasible} labeling.

We also use the following famous Sylvester's theorem.
\
\begin{theorem}{\cite{sylvester}}
\label{sylvester}
Let $a,b$ be two positive integers such that $a$ and $b$ are coprime, and let $n \in \mathbb{N}$. Then the equation $ax+by=n$, where $x$ and $y$ are nonnegative integers, have a solution $(x,y)$ whenever $n \geq (a-1)(b-1)$.
\end{theorem}

The main goal of this paper is an investigation of $3$-radio labeling number of some specific $4$-valent circulants. The class of circulant graphs is an intensively studied class of graphs thanks to their interesting properties. On one hand, circulants are known as models of double-loop networks and are of interest in the network theory. In the graph theory, circulants are highly symmetric graphs and also Cayley graphs of cyclic groups. Their structure, various graph parameters and properties have been investigated in last couple of decades, including chromatic parameters and labeling numbers. Recently, a series of papers on various $L(p,q)$-labelings of circulants has been published (see \cite{MB1, MB2, MB3}). 
%In \cite{HoKo}, we investigated (among other results) $L(3,2,1)$-labeling of $4$-valent circulants $C_n(\{1,s_2,n-s_2,n-1\})$ for $s_2\in \mathbb{N}$, $s_2=2$ or $s_2\geq 6$. In this paper we complement these results by stating bounds or exact values also for $s_2=3,4,5$.
In \cite{Cal} and later on in \cite{Cal2}, the $\lambda_{(3,2,1)}$ of the square of cycles was studied. Note that, the square of a cycle $C_n$ is equivalent to a circulant graph $C_n(1,2)$. Specifically, they showed the following statement.

\begin{theorem}
For any $n \geq 5$, $\lambda_{(3,2,1)}(C_n^2)\geq 12$; moreover, the following inequalities hold:
\begin{itemize}
    \item $\lambda_{(3,2,1)}(C_n^2)\leq 16$ if $7 \leq n \leq 40$;
    \item $\lambda_{(3,2,1)}(C_n^2)\leq 14$ if $n \geq 42$;
    \item $\lambda_{(3,2,1)}(C_n^2) \leq 15$ if $n \equiv 0 \pmod{19}$ or $n \equiv 0 \pmod{41}$;
    \item $\lambda_{(3,2,1)}(C_n^2)=12$ if $n \equiv 0 \pmod{7}$;
    \item $\lambda_{(3,2,1)}(C_n^2)\leq 14$ if $n \equiv 0 \pmod{8}$;
    \item $\lambda_{(3,2,1)}(C_n^2) \leq 13$ if $n \equiv 0 \pmod{10}$;
    \item $\lambda_{(3,2,1)}(C_n^2) \leq 15$ if $n \equiv 0 \pmod{11}$;
    \item $\lambda_{(3,2,1)}(C_n^2) \leq 16$ if $n\equiv 0 \pmod{13}$.
\end{itemize}

Finally, $\lambda_{(3,2,1)}(C_n^2)\leq 14$ when $n = 7m + q$ with $m \geq q \geq 1$.

\end{theorem}

\section{Circulants $C_n(1,3)$}

It makes sense to consider $n\geq 7$. We start with a lower bound on $\lambda_{(3,2,1)}$ of such circulants.

\begin{proposition} \label{C13 lower}
Let $n \in \mathbb{N}$, $n\geq 7$, and let $G=C_n(1,3)$. Then $\lambda_{(3,2,1)}(G)\geq 11$.
\end{proposition}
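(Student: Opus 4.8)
The plan is to argue by contradiction: assume $f$ is an $L(3,2,1)$-labeling of $G=C_n(\{1,3,n-3,n-1\})$ with span at most $10$, so all labels lie in $\{0,1,\dots,10\}$, and derive a contradiction by examining a short arc of consecutive vertices. First I would record the local distance structure of $G$ around a vertex $u_i$: its four neighbours are $u_{i\pm 1}$ and $u_{i\pm 3}$; the vertices at distance $2$ include $u_{i\pm 2}$, $u_{i\pm 4}$, $u_{i\pm 6}$; in particular $u_{i+1}$ and $u_{i+2}$ are at distance $2$, $u_i$ and $u_{i+2}$ are at distance $2$, $u_i$ and $u_{i+1}$ are adjacent, $u_i$ and $u_{i+3}$ are adjacent, and $u_{i+1},u_{i+3}$ are at distance $2$ while $u_{i+1},u_{i+4}$ are adjacent. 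So on six consecutive vertices $u_0,\dots,u_5$ we get: $|f(u_j)-f(u_{j+1})|\ge 3$, $|f(u_j)-f(u_{j+2})|\ge 2$, $|f(u_j)-f(u_{j+3})|\ge 3$, and $|f(u_j)-f(u_{j+4})|\ge 3$ (since $u_j u_{j+4}$ is at distance $1$ or $2$ depending on $n$, but in either case the gap is at least $2$; I will need to handle the distance-$1$ case via the $n-3$ chord when $n$ is small, and I would note $u_j,u_{j+4}$ are at distance $\le 2$ always since $u_{j+4}=u_{j+3+1}$ and $u_{j+3}$ is adjacent to $u_j$, hence $\dist\le 2$, giving $|f(u_j)-f(u_{j+4})|\ge 2$ always, and $\ge 3$ exactly when $4\in\{1,3\}\pmod n$ — for the generic bound $\ge 2$ suffices, but we may want more).

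The core combinatorial step is then: show that no sequence $a_0,a_1,\dots$ of integers in $\{0,\dots,10\}$ can satisfy simultaneously $|a_j-a_{j+1}|\ge 3$, $|a_j-a_{j+3}|\ge 3$, $|a_j-a_{j+2}|\ge 2$ for all $j$, together with the extra constraint coming from the chord of length $3$, i.e. the $3$-apart constraint is "adjacent-strength." I would look at three consecutive vertices $u_0,u_1,u_2$: the constraints $|f(u_0)-f(u_1)|\ge 3$, $|f(u_1)-f(u_2)|\ge 3$, $|f(u_0)-f(u_2)|\ge 2$ say $\{f(u_0),f(u_1),f(u_2)\}$ is (roughly) spread out; combined with $|f(u_0)-f(u_3)|\ge 3$, $|f(u_1)-f(u_3)|\ge 3$ (distance $1$ via $u_1u_4$? no — $u_1,u_3$ at distance $2$, so $\ge 2$) and $|f(u_2)-f(u_3)|\ge 3$. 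The cleanest route is to observe that among any two vertices at distance $\le 2$ the labels differ by $\ge 2$, and among $u_0,u_1,u_3$ and $u_2,u_3,u_5$ etc. we get near-cliques forcing large spans. Concretely, I would try to show the closed neighbourhood $N[u_0]=\{u_{-3},u_{-1},u_0,u_1,u_3\}$ together forces a span $\ge 11$: in $\langle N[u_0]\rangle$, $u_0$ is adjacent to all four others, $u_{-1}u_1$ is at distance $2$, $u_1u_3$ at distance $2$, $u_{-3}u_{-1}$ at distance $2$, $u_{-3}u_3$ at distance $6$ (distance $2$ since both adjacent to $u_0$), $u_{-1}u_3$ at distance $2$ (path $u_{-1}u_0u_3$), $u_{-3}u_1$ at distance $2$. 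So actually $\langle N[u_0]\rangle$ has the property that every pair is at distance $\le 2$, and the centre is at distance $1$ from everyone — I would compute the minimum span of such a configuration directly: with $f(u_0)=c$, the other four labels avoid $\{c-2,c-1,c+1,c+2\}$ and pairwise differ by $\ge 2$, so they are four values in a set of size $\le 11-5=6$ (if $c$ is central) pairwise $\ge 2$ apart, which needs a span of at least $6$ among them; pushing the arithmetic, four values pairwise $\ge 2$ apart need range $\ge 6$, but they must also all be $\ge 3$ away from $c$ on... — this is where the actual casework lives.

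The main obstacle I anticipate is that the "every pair at distance $\le 2$" claim for $N[u_0]$ is not quite enough by itself to force $11$ (a clique-like set of $5$ vertices with centre adjacent to all and all pairs at distance $2$ forces only about $2+2\cdot 4 = $ something like $9$ or $10$), so I will need to bring in a sixth or seventh vertex — e.g. $u_2$ or $u_{-2}$, which are at distance $2$ from $u_0$ and interact strongly with $u_1,u_3$ (adjacent to $u_{-1}$? $u_{-2}u_1$ at distance $3$) — and run a finite case analysis on the possible label of $u_0$ and the pattern of labels on $\{u_{-3},u_{-2},u_{-1},u_0,u_1,u_2,u_3\}$. To keep this manageable I would, without loss of generality, assume $f(u_0)\le 5$ (by the symmetry $x\mapsto 10-x$), split on $f(u_0)\in\{0,1,2,3,4,5\}$, and in each case show the forbidden intervals around $f(u_0)$ and its neighbours leave too little room for the remaining labels subject to the distance-$2$ (gap $\ge 2$) and distance-$1$ (gap $\ge 3$) constraints among $u_{\pm 1},u_{\pm 2},u_{\pm 3}$; a parity/counting argument ("at most $6$ labels available, needing pairwise gaps that sum past $10$") should close each case. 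I expect the write-up to reduce to a handful of such cases, with the $f(u_0)\in\{4,5\}$ (most central) cases being the tightest.
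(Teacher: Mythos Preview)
The paper's proof is essentially a computer verification: it checks that while seven consecutive vertices of the distance graph $G(1,3)$ \emph{can} be $L(3,2,1)$-labeled with span $10$, eight consecutive vertices cannot; since $C_n(\{1,3,n-3,n-1\})$ contains the induced subgraph of $G(1,3)$ on eight consecutive vertices as a subgraph for every $n\ge 8$, the bound follows (Theorem~\ref{podgraf}), with $n=7$ read off from Table~\ref{table 13 odd}.

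Your approach --- a direct hand case analysis on a short window of vertices --- is more ambitious and could in principle succeed, but as written it has a concrete gap. You propose to extract a contradiction from the seven-vertex window $\{u_{-3},u_{-2},u_{-1},u_0,u_1,u_2,u_3\}$. This cannot work: seven consecutive vertices of $G(1,3)$ \emph{do} admit a span-$10$ labeling. For instance, assigning $0,5,10,3,8,1,6$ to $u_0,\dots,u_6$ satisfies every $L(3,2,1)$ constraint internal to that window (these are just the first seven entries of the pattern used in the proof of Theorem~\ref{THM 13 11}). Consequently, for large $n$ --- where the induced structure on seven consecutive vertices of the circulant coincides with that of $G(1,3)$ --- no purely local argument on seven vertices, however the cases are split, can yield a contradiction. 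You must enlarge the window to at least eight consecutive vertices, which is exactly what the paper's computer check establishes. The resulting casework on eight vertices is feasible by hand but considerably longer than your outline anticipates; the ``at most $6$ labels available, pairwise gaps summing past $10$'' heuristic is not sharp enough to shortcut a genuine enumeration, since the seven-vertex example above shows that the constraints are only barely infeasible at eight vertices.
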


\begin{proof}
Consider the distance graph $G(1,3)$. We show that $\lambda_{(3,2,1)}(G(1,3))>10$. Suppose to the contrary that there is an $L(3,2,1)$-labeling $f'$ of $G(1,3)$ using labels from $\{0,1,\dots, 10\}$. Let $i$ denote any vertex of $G(1,3)$ having label $0$. Note that such a vertex $i$ must exist, since otherwise all labels could be decreased uniformly to obtain such a vertex. Let $G_1$ denote a subgraph of $G(1,3)$ induced by the set $\{i,i+1,i+2,\dots, i+7\}$ and $f$ be the restriction of $f'$ on $G_1$. Thus we want to label $8$ vertices of $G_1$ using $11$ labels from $\{0,1,\dots, 10\}$ such that $f(i)=0$. Clearly, $\mbox{diam}(G_1)=3$, thus all labels used on vertices of $G_1$ have to be distinct. If we do not use a triple of consecutive labels, then the only possibility is that $f$ uses labels $\{0,1,3,4,6,7,9,10\}$. 
In this set of labels, there are four disjoint pairs of consecutive integers. Each such pair must be assigned to vertices at distance at least $3$, because labels differing by $1$ require distance at least $3$ in an $L(3,2,1)$-labeling. However, in $G_1$ there are only three such pairs of vertices, a contradiction.
Thus $f$ has to use three consecutive labels $a,a+1,a+2$. Since $f(i)=0$, if $a\not=0$ then there are no three vertices at appropriate mutual distances among vertices $i+1,i+2,\dots, i+7$ to receive three consecutive labels greater than $0$. Thus $a=0$. Since $f(i)=0$, the only possibility is that $f(i+7)=1$ and $f(i+2)=2$ as any other assignment would violate the distance constraints of $L(3,2,1)$-labeling. Clearly, there is no vertex at distance $3$ from $i+2$ different from $i+7$, and therefore label $3$ cannot be used. Hence, for the remaining $5$ vertices we have only $7$ available labels $4,5,\dots, 10$. The subgraph of $G_1$ induced by $\{i+3,i+4,i+5,i+6\}$ is a $C_4$. From Theorem \ref{chromaticke_cislo_kruznic}, we have $\lambda_{(3,2,1)}(C_4)=7$, that is, it requires a span of $8$ labels, but only $7$ are available, a contradiction. Thus $\lambda_{(3,2,1)}(G_1)>10$.

For the sake of the proof, $G$ contains $G_1$ as a subgraph for any $n\geq 8$, hence $\lambda_{(3,2,1)}(G)\geq \lambda_{(3,2,1)}(G_1) \geq 11$ for every $n\geq 8$. In addition, for $n=7$ we have $\lambda_{(3,2,1)}(G)\geq 11$ from Table \ref{table 13 odd}.
\end{proof}

First we deal with $n$ even. In Table $\ref{table_1_3}$, we list exact values of $\lambda_{(3,2,1)}(C_n(1,3))$ for even $n<50$. For all graphs mentioned in Table \ref{table_1_3}, the exact values of $\lambda_{(3,2,1)}$ were obtained by a computer search. On one hand, the programme verified that the values of $\lambda_{(3,2,1)}$ cannot be reduced by showing that the corresponding graph admits no labeling using values $0,\dots, \lambda_{(3,2,1)}-1$. On the other hand, it gave a feasible labeling by a repetitive pattern, which is presented in Appendix of this paper.

\begin{table}[!ht]

\centering
\begin{tabular}{|c|c||c|c||c|c|}
  \hline
  $n$ & $\lambda_{(3,2,1)}(G)$ & $n$ & $\lambda_{(3,2,1)}(G)$ & $n$ & $\lambda_{(3,2,1)}(G)$\\
  \hline
  8 & 15 & 22 & 13 & 36 & 11\\
  10 & 13 & 24 & 11 & 38 & 13\\
  12 & 11 & 26 & 13 & 40 & 13\\
  14 & 13 & 28 & 13 & 42 & 13\\
  16 & 15 & 30 & 13 & 44 & 13\\
  18 & 13 & 32 & 13 & 46 & 13\\
  20 & 13 & 34 & 13 & 48 & 11\\
  \hline
\end{tabular}
\caption{The exact values of $\lambda_{(3,2,1)}(G)$ for circulant $G=C_n(1,3)$ with even $n<50$}
\label{table_1_3}
\end{table}

In the following statement we show that the lower bound from Proposition \ref{C13 lower} is the exact value for some specific values of $n$. 

\begin{theorem} \label{THM 13 11}
Let $k,n\in \mathbb{N}$, $n=12k$, and let $G=C_n(1,3)$. Then $\lambda_{(3,2,1)}(G)=11$.
\end{theorem}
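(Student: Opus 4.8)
The lower bound $\lambda_{(3,2,1)}(G)\geq 11$ is already supplied by Proposition \ref{C13 lower}, so the whole task is to exhibit an $L(3,2,1)$-labeling of $G=C_{12k}(\{1,3,n-3,n-1\})$ of span exactly $11$. The plan is to use an explicit periodic labeling with a pattern of length $12$ and to check feasibility by a short computation modulo $12$ rather than vertex by vertex. Concretely, I would set $f(u_i)=(5i)\bmod 12$ for all $i$ (equivalently, the pattern $0,5,10,3,8,1,6,11,4,9,2,7$ of length $12$); since $12\mid n$ this is a well-defined labeling of $G$, and since $\gcd(5,12)=1$ the values used are exactly $0,1,\dots,11$, so the span is $11$.

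The feasibility verification rests on one observation: for any two vertices $u_i,u_j$ one has $f(u_i)-f(u_j)\equiv 5(i-j)\pmod{12}$, so $\vert f(u_i)-f(u_j)\vert\geq\varphi\big((i-j)\bmod 12\big)$, where $\varphi(r)=\min\{(5r)\bmod 12,\ 12-((5r)\bmod 12)\}$; one computes $\varphi(0)=0$, $\varphi(1)=\varphi(11)=5$, $\varphi(2)=\varphi(10)=2$, $\varphi(3)=\varphi(9)=3$, $\varphi(4)=\varphi(8)=4$, $\varphi(5)=\varphi(7)=1$, $\varphi(6)=6$. It then suffices to read off, using that $n=12k$ is even, which residues $(i-j)\bmod 12$ can occur for each small distance. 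If $\dist_G(u_i,u_j)=1$, then $(i-j)\bmod n\in\{1,3,n-1,n-3\}$, hence $(i-j)\bmod 12\in\{1,3,9,11\}$ and $\vert f(u_i)-f(u_j)\vert\geq 3>3-1$. If $\dist_G(u_i,u_j)=2$, there is a path of length $2$, so $i-j$ is even, and $i-j$ is not a nonzero multiple of $12$ (see below), hence $(i-j)\bmod 12\in\{2,4,6,8,10\}$ and $\vert f(u_i)-f(u_j)\vert\geq 2>3-2$. If $\dist_G(u_i,u_j)=3$, there is a path of length $3$, so $i-j$ is odd, hence $(i-j)\bmod 12$ is odd and $\vert f(u_i)-f(u_j)\vert\geq 1>3-3$. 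For distances at least $4$ there is nothing to prove.

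The single auxiliary fact I need is that $\dist_G(u_i,u_{i+12m})\geq 4$ for $1\leq m\leq k-1$: a path of length at most $3$ would express $12m$ as a sum of at most three elements of $\{\pm 1,\pm 3\}$ modulo $n$, which is impossible, since one or three such steps sum to an odd number whereas $12m$ is even, and two steps sum to an element of $\{0,\pm 2,\pm 4,\pm 6\}$, none of which is $\equiv 12m\pmod{n}$ when $0<m<k$ (reducing modulo $12$ forces the sum to be $0$, and then $k\mid m$). This also shows that any two vertices receiving the same label, i.e.\ with $i\equiv j\pmod{12}$, are at distance at least $4$, so the distance-$2$ and distance-$3$ cases above are not vacuously violated by repeated labels. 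Putting this upper bound together with Proposition \ref{C13 lower} completes the proof.

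I expect the only genuinely delicate point to be precisely this wrap-around bookkeeping. For small $n$ — already for $n=12$ — some offsets that \emph{look like} distance-$3$ offsets, for instance $u_i,u_{i\pm 9}$, are in fact adjacency offsets, so one must make sure the period-$12$ pattern still meets the stronger constraint at those residues; the table of $\varphi$ handles this automatically (for example $\varphi(9)=3$), but it is worth spelling out carefully so that no case is missed. Beyond that, the argument is elementary modular arithmetic.
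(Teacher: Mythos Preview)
Your proof is correct and uses the same labeling as the paper (the paper presents it as the pattern $0,5,10,3,8,1,6,11,4,9,2,7$, which is exactly your $f(u_i)=5i\bmod 12$), together with the same periodicity idea and the same lower bound from Proposition~\ref{C13 lower}. The only real difference is in the verification step: the paper simply records that a computer check confirms this is an $L(3,2,1)$-labeling of $C_{12}(\{1,3,9,11\})$ and then argues that, after copying the pattern $k$ times, neighbours and distance-$2$ vertices see the same labels as in the base case while distance-$3$ offsets stay within $\pm 9$ and hence hit distinct entries of the length-$12$ pattern. You replace that computer check by the clean modular-arithmetic computation via $\varphi$, and you exploit the parity of the generators $\{\pm 1,\pm 3\}$ to identify which residues modulo $12$ can occur at each distance. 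This buys you a fully self-contained, machine-free argument and also makes transparent why the ``wrap-around'' cases (e.g.\ offset $\pm 9$ in $C_{12}$) cause no trouble. Conversely, the paper's formulation has the advantage of fitting the template used throughout the rest of the article, where patterns of various lengths are spliced together; your $\varphi$-table argument does not generalize as directly to those non-uniform constructions.
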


\begin{proof}
The pattern $0,5,10,3,8,1,6,11,4,9,2,7$ forms an $L(3,2,1)$-labeling of $C_{12}(1,3)$ (see Fig. \ref{FIG C13}). For a labeling of $G$, copy this pattern $k$ times along the whole $G$. For each $u\in V(G)$, all neighbours of $u$ in $G$ have exactly the same labels as in $C_{12}(1,3)$. Analogously, vertices at distance $2$ from $u$ have the same labels in $G$ as in $C_{12}(1,3)$. For each $u_i,u_j\in V(G)$ with $\dist_G(u_i,u_j)=3$ we have $\vert i-j\vert \leq 9$ or $\vert i-j\vert \geq n-9$. Since the length of the pattern is $12$ and no label is repeated, $f(u_i)\not=f(u_j)$. Hence the defined labeling is an $L(3,2,1)$-labeling of $G$. Together with Proposition \ref{C13 lower} we obtain $\lambda_{(3,2,1)}(G)=11$.
\end{proof}

\begin{figure}[ht]
  \centering
  \includegraphics[width=0.4\textwidth]{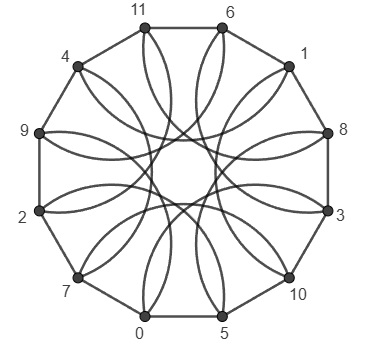}
%  \captionsetup{justification=centering}
  \caption{The graph $C_{12}(1,3)$ labeled using values from $0,1,\dots, 11$.}
  \label{FIG C13}
\end{figure}

For proving a general upper bound on $\lambda_{(3,2,1)}(C_n(1,3))$ we use two repetitive patterns presented in Proposition \ref{prop C13 upper}, one for $n=10$ and one for $n=14$. Using these two patterns we will generate an $L(3,2,1)$-labeling of $C_n(1,3)$ for any even $n\geq 48$.

\begin{proposition} \label{prop C13 upper}
Let $k,n\in\mathbb{N}$ and $G=C_n(1,3)$. If $n=10k$ or $n=14k$, then $\lambda_{(3,2,1)}(G)\leq 13$ and the following patterns form an $L(3,2,1)$-labeling of $G$
%\begin{equation*}0,3,6,9,12,1,4,7,10,13 \quad \mbox{for } n=10k,\end{equation*}
\begin{equation*}
P_1=(0,3,10,13,6,1,4,9,12,7), \qquad
P_2=(0,7,12,3,10,1,6,13,4,9),  \end{equation*} 

\vspace*{-8mm} 

\begin{equation*}
\mbox{and } \,\, P_3=(0,3,6,9,12,1,4,7,10,13) \quad \mbox{for } n=10k,
\end{equation*}
\begin{equation*} 
P_4=(0,3,6,9,12,1,4,7,10,13,2,5,8,11) \quad \mbox{for } n=14k.
\end{equation*}
\end{proposition}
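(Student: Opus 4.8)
The plan is to verify directly that each of the two patterns yields a feasible $L(3,2,1)$-labeling of the corresponding circulant, and that this forces $\lambda_{(3,2,1)}(G)\le 13$. First I would fix notation: write the circulant as $C_n(\{1,3,n-3,n-1\})$ with vertices $u_0,\dots,u_{n-1}$, and observe that in this graph $\dist_G(u_i,u_j)$ depends only on the cyclic difference $d=\min(|i-j|,\,n-|i-j|)$. Since the generating set is $\{1,3\}$ (and their negatives), the neighbours of $u_i$ are those at cyclic difference $1$ or $3$; vertices at distance exactly $2$ are those at cyclic difference $2$, $4$ or $6$ (i.e.\ sums of two elements of $\{1,3\}$ that are not themselves in the connection set); and vertices at distance exactly $3$ are those at cyclic difference $5$, $7$ or $9$ that are not already within distance $2$. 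The key preliminary computation is therefore: for the relevant $n$, every pair at cyclic difference $\le 9$ is within distance $\le 3$, so that once we control differences of labels up to cyclic distance $9$ we are done.

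Next, for the period-$10$ pattern $P=(0,3,6,9,12,1,4,7,10,13)$, I would verify the three label-gap conditions by a finite check on the pattern together with a shift argument. Because the labeling is periodic with period $10$ and $n=10k$, it suffices to check all pairs $(u_0,u_j)$ with $j=1,\dots,9$ (and, when $k=1$, a small additional check at the wrap-around, but for $k\ge1$ with $n=10k\ge10$ the period already divides $n$ so no exceptional pairs arise). Concretely: neighbours (cyclic difference $1$ or $3$) must have label difference $\ge 3$; one checks $|f(u_0)-f(u_1)|=3$, $|f(u_0)-f(u_3)|=9$, etc., cycling through all residues. Vertices at distance $2$ (cyclic difference $2,4,6$) must differ by $\ge 2$; vertices at distance $3$ (cyclic difference $5,7,9$) must have distinct labels. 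Since no label repeats within one period of length $10$ and all these differences are $\le 9<10$, distinctness at distance $3$ is automatic, and the distance-$1$ and distance-$2$ bounds follow from the arithmetic of the pattern. The same scheme handles the period-$14$ pattern $Q=(0,3,6,9,12,1,4,7,10,13,2,5,8,11)$ for $n=14k$: here one checks pairs at cyclic difference $1,\dots,9$, using that $14>9$ so again no label is repeated within the relevant window, and verifies the distance-$1$ and distance-$2$ constraints by inspection of $Q$. In both cases the largest label used is $13$ and the smallest is $0$, so the span is $13$, giving $\lambda_{(3,2,1)}(G)\le 13$.

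The step I expect to require the most care is the correct classification of distances in $G=C_n(\{1,3,n-3,n-1\})$, i.e.\ confirming that cyclic difference $d\in\{2,4,6\}$ always gives distance exactly $2$ and $d\in\{5,7,9\}$ always gives distance exactly $3$, for the values of $n$ in question. For small $n$ (the base cases $n=10$ and $n=14$) one must make sure that the "short way around" is being used and that, e.g., difference $6$ in $C_{10}$ — which is the same as difference $4$ — is still at distance $2$; more generally one should note that for $n\ge 10k$ with $k\ge 1$ this wrap-around identification only ever helps (it can only decrease distances, and a decreased distance imposes a \emph{weaker} constraint once we have checked the relevant gap is already large enough). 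A clean way to avoid case analysis is to prove the single lemma: if $|i-j|\le 9$ or $|i-j|\ge n-9$ then $\dist_G(u_i,u_j)\le 3$, and then separately check that the label differences satisfy $|f(u_i)-f(u_j)|\ge 3-\dist_G(u_i,u_j)$ by splitting on the value of the cyclic difference $d\le 9$ exactly as in the statement-level argument used for Theorem~\ref{THM 13 11}. With that lemma in hand, the rest is the routine finite verification described above, and the proposition follows together with periodicity of the patterns.
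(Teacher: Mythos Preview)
Your approach is essentially the paper's: verify the two patterns on the base circulants and then argue that periodic repetition preserves the $L(3,2,1)$ conditions. The paper simply says the base cases were ``obtained and verified by a computer search'' and then handles the extension to $n=10k$, $n=14k$ exactly as you propose, using that any pair at distance $3$ has index gap at most $9$ and that the patterns contain no repeated labels within a period.

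Two slips to tighten. First, the lemma you formulate, ``if $|i-j|\le 9$ or $|i-j|\ge n-9$ then $\dist_G(u_i,u_j)\le 3$'', is the wrong direction and is actually false: cyclic difference $8$ is at distance $4$ in $C_n(\{1,3,n-3,n-1\})$ for $n\ge 16$, since three steps from $\{\pm1,\pm3\}$ always give an odd displacement. What you need (and what the paper uses) is the converse, $\dist_G(u_i,u_j)\le 3 \Rightarrow$ cyclic difference $\le 9$; your earlier classification $d\in\{1,3\}\to$ dist $1$, $d\in\{2,4,6\}\to$ dist $2$, $d\in\{5,7,9\}\to$ dist $3$ is the right thing to record. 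Second, your remark that ``a decreased distance imposes a weaker constraint'' is backwards: smaller distance means a \emph{stronger} label-gap requirement. The clean way to phrase the wrap-around point is via the natural graph homomorphism $C_{10k}\to C_{10}$ (reduction of indices mod $10$), which can only \emph{decrease} distances; hence verifying the labeling on $C_{10}$, where constraints are strongest, implies it on every $C_{10k}$. With these corrections your argument goes through and matches the paper's.
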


\begin{proof}
Patterns $P_1$, $P_2$ and $P_3$ form an $L(3,2,1)$-labeling of $C_{10}(1,3)$, pattern $P_4$ forms an $L(3,2,1)$-labeling of $C_{14}(1,3)$ (all these patterns are illustrated in Fig. \ref{FIG C13 10} and \ref{FIG C13 14}). Since each of the patterns contains values from $\{0,1,\dots, 13\}$, we have $\lambda_{(3,2,1)}(G)\leq 13$ for $n=10$ and $n=14$.

Take a relevant pattern and copy it $k$ times along the whole $G$. We show that such a labeling is an $L(3,2,1)$-labeling of $G$. For each $u\in V(G)$, all neighbours of $u$ in $G$ have exactly the same labels as in the trivial circulant $C_{10}(1,3)$ or $C_{14}(1,3)$, respectively. Analogously, vertices at distance $2$ from $u$ have the same labels in $G$ as in the relevant trivial circulant. For each $u_i,u_j\in V(G)$ with $\dist_G(u_i,u_j)=3$ we have $\vert i-j\vert \leq 9$ or $\vert i-j\vert \geq n-9$. Since the length of any of the two patterns is at least 10 and no label is repeated in any of the patterns, $f(u_i)\not=f(u_j)$. Hence the defined labeling is an $L(3,2,1)$-labeling of $G$. 
\end{proof}

\begin{figure}[ht]
  \centering
  \includegraphics[width=1.0\textwidth]{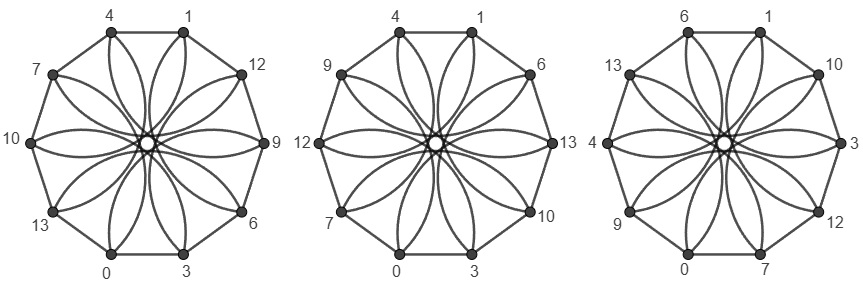}
  \caption{Circulant $C_{10}(1,3)$ with three different labelings using values $0,1,\dots, 11$}
  \label{FIG C13 10}
\end{figure}

\begin{figure}[ht]
  \centering
  \includegraphics[width=0.37\textwidth]{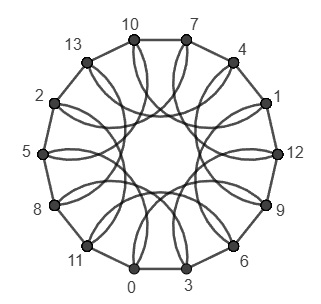}
%  \captionsetup{justification=centering}
  \caption{The graph $C_{14}(1,3)$ labeled using values from $0,1,\dots, 13$}
  \label{FIG C13 14}
\end{figure}

The following statement gives a general upper bound on $\lambda_{(3,2,1)}$ of $C_n(1,3)$ with even $n$.

\begin{theorem} \label{THM 13 even}
Let $n \in \mathbb{N}$, $n\geq 18$ be even, and let $G=C_n(1,3)$. Then $\lambda_{(3,2,1)}(G) \leq 13$.
\end{theorem}

\begin{proof}
For $18\leq n\leq 48$, the statement follows from Table \ref{table_1_3}. The relevant patterns are presented in Appendix. Hence we assume that $n\geq 50$. By the Sylvester's theorem (Theorem \ref{sylvester}), for every $n\geq 24$, there are nonnegative integers $k',\ell'$ such that $n=5k'+7\ell'$. Therefore, for every even integer $n\geq 48$, there are nonnegative integers $k,\ell$ such that $n=10k+14\ell$. We use two patterns presented in Proposition \ref{prop C13 upper} - pattern $P_1=(0,3,6,9,12,1,4,7,10,13)$ and pattern $P_2=(0,3,6,9,12,1,4,7,10,13,2,5,8,11)$. We define a labeling $f$ of the vertices of $G$ in such a way that we copy pattern $P_1$ $k$ times on vertices $u_1,u_2,\dots, \, u_{10k}$ and then we copy pattern $P_2$ $\ell$ times on vertices $u_{10k+1}, u_{10k+2}, \dots, \, u_n$. From Proposition \ref{prop C13 upper}, there is no collision in the defined labeling between any pair of vertices from $u_1,u_2,\dots, \, u_{10k}$ and any pair of vertices from $u_{10k+1}, u_{10k+2}, \dots, \, u_n$. It remains to show that the patterns for trivial circulants $C_{10}(1,3)$ and $C_{14}(1,3)$ can be combined into a feasible labeling of $G$. It suffices to show that there is no collision between two consecutive patterns of distinct lengths. We have the following sequences of labels (in the first line, the pattern of length $10$ is followed by the pattern of length $14$; in the second line, the patterns are switched). Note that the symbol $\vert$ means the border between the patterns.

\begin{equation*}
0,3,6,9,12,1,4,7,10,13, \,\, \vert \,\, 0,3,6,9,12,1,4,7,10,13,2,5,8,11,
\end{equation*}

\begin{equation*}
0,3,6,9,12,1,4,7,10,13,2,5,8,11, \,\, \vert \,\, 0,3,6,9,12,1,4,7,10,13.
\end{equation*} 

Obviously, the pattern of length $10$ is a subsequence of the pattern of length $14$, therefore we infer that there is no collision in the labeling $f$, and hence $f$ is an $L(3,2,1)$-labeling of $G$.
\end{proof}

For large $n$, the upper bound stated in Theorem \ref{THM 13 even} can be improved. For proving that, we will use the following observation.

\begin{observ}
\label{vzory_1_3}
Let $G=C_n(1,3)$, where $n \in \{50, 76, 126, 130, 152\}$. Then $\lambda_{(3,2,1)}(G)\leq 12$ and the following sequences of labels form relevant $L(3,2,1)$-labelings:

\begin{equation*}
\begin{array}{ll}
n & \mbox{pattern} \\
50 & 0, 5, 10, 3, 8, 1, 6, 11, 4, 9, 2, 7, 12, 5, 0, 3, 8, 11, 6, 1, 4, 9, 12, 7, 2, 5, 0, 11, 8, \\ & 3, 6, 1, 12, 9, 4, 7, 2, 11, 0, 5, 8, 3, 12, 1, 6, 9, 4, 11, 2, 7; \\

76 & 0, 5, 10, 3, 8, 1, 6, 11, 4, 9, 2, 7, 12, 5, 10, 1, 8, 3, 6, 11, 0, 9, 4, 7, 12, 1, 10, 5, 8, \\ & 3, 0, 11, 6, 9, 4, 1, 12, 7, 10, 5, 0, 3, 8, 11, 6, 1, 4, 9, 12, 7, 2, 5, 0, 11, 8, 3, 6, 1,  \\ & 12, 9, 4, 7, 2, 11, 0, 5, 8, 3, 12, 1, 6, 9,
4, 11, 2, 7; \\

126 & 0, 5, 10, 3, 8, 1, 6, 11, 4, 9, 2, 7, 12, 5, 0, 3, 8, 11, 6, 1, 4, 9, 12, 7, 2, 5, 0, 11, 8, 3,  \\ & 6, 1, 12, 9, 4, 7, 2, 11, 0, 5, 8, 3, 12, 1, 6, 9, 4, 11, 2, 7, 0, 5, 10, 3, 8, 1, 6, 11, 4, 9, \\ & 0, 7, 12, 5, 10, 1, 8, 3, 6, 11, 0, 9, 4, 7, 12, 1, 10, 5, 8, 3, 0, 11, 6, 9, 4, 1, 12, 7, 10,  \\ & 5, 0, 3, 8, 11, 6, 1, 4, 9, 12, 7, 2, 5, 0, 11, 8, 3, 6, 1, 12, 9, 4, 7, 2, 11, 0, 5, 8, 3, 12, \\ & 1, 6, 9, 4, 11, 2, 7; \\

130 & 0, 5, 10, 3, 8, 1, 6, 11, 4, 9, 2, 7, 12, 5, 10, 3, 8, 0, 6, 11, 4, 9, 1, 7, 12, 5, 10, 2, 8,  \\ & 0, 6, 11,  3, 9, 1, 7, 12, 4, 10, 2, 8, 0, 5, 11, 3, 9, 1, 6, 12, 4, 10, 2, 7, 0, 5, 11, 3, 8,   \\ & 1, 6, 12, 4, 9, 2, 7, 0, 5, 10, 3, 8, 1, 6, 11, 4, 9, 2, 7, 12, 5, 10, 3, 8, 0, 6, 11, 4, 9, \\ & 1, 7, 12, 5, 10, 2, 8, 0, 6, 11, 3, 9, 1, 7, 12, 4, 10, 2, 8, 0, 5, 11, 3, 9, 1, 6, 12, 4, 10, \\ & 2, 7, 0,  5, 11, 3, 8, 1, 6, 12, 4,  9, 2, 7; 
\end{array}
\end{equation*}

\vspace{2cm} \mbox{}

\begin{equation*}
\begin{array}{ll}
n & \mbox{pattern} \\
152 & 0, 5, 10, 3, 8, 1, 6, 11, 4, 9, 2, 7, 12, 5, 10, 1, 8, 3, 6, 11, 0, 9, 4, 7, 12, 1, 10, 5, 8, \\ &  3, 0, 11,  6, 9, 4, 1, 12, 7, 10, 5, 0, 3, 8, 11, 6, 1, 4, 9, 12, 7, 2, 5, 0, 11, 8, 3, 6, 1,  \\ & 12, 9, 4, 7, 2, 11, 0, 5, 8, 3, 12, 1, 6, 9, 4, 11, 2, 7, 0, 5, 10, 3, 8, 1, 6, 11, 4, 9, 2,  \\ & 7, 12, 5, 10, 1, 8, 3, 6, 11, 0, 9, 4, 7, 12, 1, 10, 5, 8, 3, 0, 11, 6, 9, 4, 1, 12, 7, 10,  \\ & 5, 0, 3, 8, 11,  6, 1, 4, 9, 12, 7, 2, 5, 0, 11, 8, 3, 6, 1, 12, 9, 4, 7, 2, 11, 0, 5, 8, 3,  \\ &  12, 1, 6, 9, 4, 11, 2, 7.
\end{array}
\end{equation*}
\end{observ}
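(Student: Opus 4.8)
The plan is to treat the two inequalities $\lambda_{(3,2,1)}(G)\le 12$ and $\lambda_{(3,2,1)}(G)\ge 12$ separately, exactly as in the proofs of Theorems~\ref{THM 13 11} and \ref{THM 13 even}. First I would record the local distance structure of $G=C_n(\{1,3,n-3,n-1\})$. Identifying $V(G)$ with $\mathbb{Z}_n$ and calling a ``step'' a move by $\pm 1$ or $\pm 3$, one checks (for $n\ge 50$, which covers all five values) that two vertices $i,j$ are at distance $1$ iff $\min(|i-j|,n-|i-j|)\in\{1,3\}$, at distance $2$ iff it lies in $\{2,4,6\}$, at distance $3$ iff it lies in $\{5,7,9\}$, and at distance at least $4$ otherwise; note that a difference of $8$, as well as every difference larger than $9$, is at even distance $\ge 4$ and imposes no constraint. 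Consequently a map $f\colon\mathbb{Z}_n\to\{0,1,\dots,12\}$ is an $L(3,2,1)$-labeling of $G$ if and only if, for every $i$,
\[
|f(i)-f(i\pm1)|\ge 3,\qquad |f(i)-f(i\pm3)|\ge 3,
\]
\[
|f(i)-f(i\pm2)|\ge 2,\qquad |f(i)-f(i\pm4)|\ge 2,\qquad |f(i)-f(i\pm6)|\ge 2,
\]
and $f(i)\notin\{f(i\pm5),\,f(i\pm7),\,f(i\pm9)\}$.

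For the upper bound, I would observe that each of the five displayed sequences has length \emph{exactly} $n$, so a labeling is obtained simply by reading it around the cycle $\mathbb{Z}_n$, and all entries already lie in $\{0,\dots,12\}$. It then remains to verify the finitely many cyclic constraints above --- eight families, $n$ instances each --- which is a routine if tedious check performed by computer; this yields $\lambda_{(3,2,1)}(G)\le 12$. I would also point out the block structure of the long patterns: the length-$126$ and length-$152$ sequences are, up to a few local adjustments near the joins, concatenations of the length-$50$ and length-$76$ patterns (since $126=50+76$ and $152=76+76$), so once the two short patterns and the compatibility at their seams have been checked, the longer cases largely reduce to the gluing argument already used for Theorem~\ref{THM 13 even}; the length-$130$ pattern does not admit such a decomposition and has to be checked directly.

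For the matching lower bound, Proposition~\ref{C13 lower} already gives $\lambda_{(3,2,1)}(G)\ge 11$, so it is enough to exclude an $L(3,2,1)$-labeling of span $11$, i.e.\ a map $f\colon\mathbb{Z}_n\to\{0,1,\dots,11\}$ satisfying the same system of constraints. For each of the five fixed values of $n$ this is decided by an exhaustive backtracking search over partial labelings: placing $f(0),f(1),f(2),\dots$ in turn, each new vertex has eight already-fixed neighbours within distance $3$, so only few labels remain admissible and the search tree stays small; for all five values the search ends with no solution. Combining the two bounds gives $\lambda_{(3,2,1)}(G)=12$.

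The argument is thus almost entirely computational, the only genuinely mathematical step being the distance computation of the first paragraph; the main practical obstacle is making the span-$11$ search terminate quickly for $n=152$, where one relies on the strong local restriction behind Proposition~\ref{C13 lower} (few consecutive vertices can be labeled with a restricted palette) to prune the search, together with careful bookkeeping to verify the long patterns without error.
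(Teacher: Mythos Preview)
Your approach is essentially the same as the paper's --- computer verification of the displayed patterns for the upper bound and an exhaustive search ruling out span $11$ for the lower bound --- with the helpful addition of an explicit distance analysis and the block-decomposition remark, neither of which the paper spells out. One small slip: odd differences larger than $9$ (e.g.\ $11$) are at \emph{odd} distance, not even, but this is immaterial since all you need is that every such difference is at distance at least~$4$.
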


\begin{proof}
All the patterns were obtained and verified by a computer search. Clearly, each of the patterns uses labels $0,1,\dots, 12$, hence $\lambda_{(3,2,1)}(G)\leq 12$. % Using a computer check we found out that none of the considered circulants can be labeled using labels $0,1,\dots, 11$. Analogously as 

%The program began by assigning label $0$ to a vertex of the distance graph $G(1,3)$ and then attempted to extend this labeling to the rest of the graph. Note that such a vertex must exist, since otherwise all labels could be decreased uniformly, reducing the span. The program was able to label only $7$ consecutive vertices of $G(1,3)$ using labels $0,1,\dots,10$.
\end{proof}

\begin{theorem} \label{THM 13 even better}
Let $n\in \mathbb{N}$, $n\geq 142$ be even, and let $G=C_n(1,3)$. Then $\lambda_{(3,2,1)}(G)\leq 12$.
\end{theorem}

\begin{proof}
In the proof, we will consider possibilities depending on the remainder $r$ after division of $n$ by $12$. Let $n=12k+g(r)$ for some $k\in\mathbb{N}$, where $g(0)=0$, $g(2)=50$, $g(4)=76$,  $g(6)=126$, $g(8)=152$ and $g(10)=130$. For $g(0)=0$ we label the  vertices of $G$ according to the proof of Theorem $\ref{THM 13 11}$ and we have $\lambda_{(3,2,1)}(G)\leq 12$.

In the remaining cases, we define a labeling $f$ of the vertices of $G$ in such a way that we copy the pattern $0,5,10,3,8,1,6,11,4,9,2,7$ given in the proof of Theorem $\ref{THM 13 11}$ $k$ times on vertices $u_1,u_2,\dots, u_{12k}$, and then we use the relevant pattern given in Observation $\ref{vzory_1_3}$ for corresponding $g(r)$. It follows that the only possible collision in the defined labeling of $G$ could be between some vertex labeled by the pattern of length $12$ and some vertex labeled by the pattern of length $g(r)$. Specifically, it suffices to show that there is no collision between any vertex from $\{u_1,u_2,\dots, \, u_9\}$ and from $\{ u_{n-8}, u_{n-7}, \dots, \, u_{n}\}$, and between any vertex from $\{ u_{10k-8}, u_{10k-7}, \dots, \, u_{10k}\}$ and from $\{u_{10k+1}, u_{10k+2},\dots, \, u_{10k+9}\}$, since $\dist_G(u_i,u_j)>3$ for every $i,j$ with $\vert i-j\vert>9$ and $\vert i-j\vert <n-9$.

Obviously, the pattern of length $12$ is a subsequence of each of the patterns given in Observation \ref{vzory_1_3} (first $12$ values), therefore we infer that there is no collision in the labeling $f$, and thus $f$ is an $L(3,2,1)$-labeling of $G$.
Since $12 \equiv 0 \,(\mbox{mod } 12)$, $50\equiv 2 \,(\mbox{mod } 12)$, $76\equiv 4\, (\mbox{mod } 12)$, $126 \equiv 6 \,(\mbox{mod } 12)$, $152 \equiv 8 \,(\mbox{mod } 12)$ and $130 \equiv 10\, (\mbox{mod } 12)$, we discussed all even $n$. Therefore $\lambda_{(3,2,1)}(G)\leq 12$ for every even $n \geq 142$.
\end{proof}

Note that, for $n \in \{38, 64, 114, 118, 140\}$, we have $\lambda_{(3,2,1)}(G)=13$ by a computer search, thus the assumption on $n$ in the previous theorem cannot be decreased. We also believe that the following conjecture holds true. We tested all even integers $n\leq 500$ and we found that $\lambda_{(3,2,1)}(G)=11$ if and only if $n$ is divisible by $12$.

\begin{conjecture}
Let $n\in \mathbb{N}$, $n\geq 142$ be even, and let $G=C_n(1,3)$. If $n$ is not divisible by $12$, then $\lambda_{(3,2,1)}(G) =12$.
\end{conjecture}

Now we move on to odd $n$. In Table $\ref{table 13 odd}$, a list of exact values of $\lambda_{(3,2,1)}(C_n(1,3))$ for odd $n<50$ is given. These values were obtained by a computer search. For all graphs mentioned in Table \ref{table 13 odd}, the exact values of $\lambda_{(3,2,1)}$ were obtained by a computer search. On one hand, the programme verified that the values of $\lambda_{(3,2,1)}$ cannot be reduced by showing that the corresponding graph admits no labeling using values $0,\dots, \lambda_{(3,2,1)}-1$. On the other hand, it gave a feasible labeling by a repetitive pattern, which is presented in Appendix of this paper.

\begin{table}[!ht]

\centering
\begin{tabular}{|c|c||c|c||c|c|}

  \hline
  $n$ & $\lambda_{(3,2,1)}(G)$ & $n$ & $\lambda_{(3,2,1)}(G)$ & $n$ & $\lambda_{(3,2,1)}(G)$\\
  \hline 
  7  & 12 & 23 & 15 & 39 & 14\\
  9  & 16 & 25 & 14 & 41 & 13\\
  11 & 20 & 27 & 14 & 43 & 14\\
  13 & 18 & 29 & 14 & 45 & 14\\
  15 & 14 & 31 & 15 & 47 & 14\\
  17 & 16 & 33 & 15 & 49 & 14\\
  19 & 18 & 35 & 14 &  & \\
  21 & 16 & 37 & 14 &  &  \\
  \hline
\end{tabular}
\caption{The exact values of $\lambda_{(3,2,1)}(G)$ for circulant $G=C_n(1,3)$ with odd $n<50$}
\label{table 13 odd}
\end{table}

For proving a general upper bound on $\lambda_{(3,2,1)}$ of $C_n(1,3)$ when $n$ is odd, we will use the following observation and repetitive patterns for $n=10$ presented in Proposition \ref{prop C13 upper}.

\begin{observ} \label{obs C13 vzory odd}
Let $G=C_n(1,3)$, where $n\in \{51,53,55,57,59,67,69\}$. Then $\lambda_{(3,2,1)}(G)\leq 13$ and the following sequences of labels form relevant $L(3,2,1)$-labelings:
\begin{equation*}
\begin{array}{ll}
n & \mbox{pattern} \\
51 & 0,3,10,13,6,1,4,9,12,7,0,5,10,3,8,13,6,11,1,9,4,7,12,0,10,2,8,13,\\ & 5,11,3,9,1,6,12,4,10,2,7,13,5,0,3,8,11,6,1,4,9,12,7; \\
53 & 0,7,12,5,10,1,8,13,6,11,3,9,0,7,12,4,10,2,8,13,5,11,3,9,1,6,12,4,\\ & 10,2,7,13,5,0,3,8,11,6,1,4,9,12,7,0,3,10,5,8,13,2,11,4,9; \\
55 & 0,3,8,11,6,1,4,9,12,7,0,5,10,3,8,13,6,11,1,9,4,7,12,0,10,2,8,13,5,\\ & 11,3,9,1,6,12,4,10,2,7,13,5,0,3,8,11,6,1,4,9,12,7,2,5,10,13;\\
57 & 0,3,8,5,10,1,12,7,4,9,0,11,6,13,8,2,10,5,12,7,1,9,3,11,6,0,8,2,10,\\ & 4,13,7,1,9,3,12,5,0,8,2,11,4,13,6,1,10,3,12,5,0,7,2,9,4,11,6,13; \\\end{array}
\end{equation*}

\begin{equation*}
\begin{array}{ll}
n & \mbox{pattern} \\

59 & 0,3,12,5,8,1,10,13,4,7,0,9,2,11,6,13,8,1,10,4,12,7,0,9,3,11,5,13,\\ & 8,2,10,4,12,6,1,9,3,11,5,0,7,2,10,4,13,6,1,8,3,12,5,0,7,2,11,4,13,\\ & 6,9; \\
67 & 0,3,6,9,12,1,4,7,10,13,0,3,8,11,6,1,4,9,12,7,2,5,10,13,8,0,6,11,3,\\ & 9,1,7,12,4,10,2,8,0,5,11,3,13,1,6,9,4,12,2,7,0,5,10,3,8,13,6,11,0,\\ & 9,4,7,12,1,10,5,8,13; \\
69 & 0,3,6,9,12,1,4,7,10,13,0,3,8,11,6,1,4,9,12,7,2,5,10,13,8,0,6,11,3,\\ & 9,1,7,12,4,10,2,8,0,5,11,3,13,1,6,9,4,12,2,7,0,5,10,3,8,13,6,11,0,\\ & 9,2,5,12,7,10,1,4,13,8,11.\\
\end{array}
\end{equation*}
\end{observ}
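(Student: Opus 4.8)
The inequality $\lambda_{(3,2,1)}(G)\le 13$ is an immediate consequence of the first half of the statement: each of the seven displayed sequences uses only the labels $0,1,\dots,13$ and starts at $0$, so it certifies the bound as soon as it is shown to be a genuine $L(3,2,1)$-labeling of the corresponding $G=C_n(\{1,3,n-3,n-1\})$. (Unlike Observation \ref{vzory_1_3}, no exact value is claimed here, so no non-existence argument is required.) Hence the whole content is to verify, for each of the seven values of $n$, that the given cyclic sequence $f(u_1),\dots,f(u_n)$ satisfies the labeling constraints. I would organize this finite check around the metric of $G$.

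First I would record the distances in $G$. Every step of a walk in $G$ changes the index by $\pm 1$ or $\pm 3$ modulo $n$, so writing $\langle m\rangle:=\min\{m\bmod n,\,(-m)\bmod n\}$ for the circular difference of two indices, $\dist_G(u_i,u_j)$ is the least number of terms from $\{\pm1,\pm3\}$ summing to $\langle i-j\rangle$ modulo $n$. A breadth-first search from $0$ in the infinite distance graph $G(1,3)$ (equivalently, a one-line induction) gives, for every $n\ge 19$: $\dist_G(u_i,u_j)=1\iff\langle i-j\rangle\in\{1,3\}$; $\dist_G(u_i,u_j)=2\iff\langle i-j\rangle\in\{2,4,6\}$; $\dist_G(u_i,u_j)=3\iff\langle i-j\rangle\in\{5,7,9\}$; and $\dist_G(u_i,u_j)\ge 4$ otherwise. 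Since each of the seven values of $n$ satisfies $n>2\cdot 9$, the offsets $1,\dots,9$ and $n-9,\dots,n-1$ are disjoint, so no wrap-around merges two of these classes and the description is exact for all $n$ in the statement.

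Given this, checking that a cyclic sequence $f(u_1),\dots,f(u_n)$ is an $L(3,2,1)$-labeling of $G$ reduces to verifying, with all indices read modulo $n$: $|f(u_i)-f(u_{i+\delta})|\ge 3$ for $\delta\in\{1,3\}$; $|f(u_i)-f(u_{i+\delta})|\ge 2$ for $\delta\in\{2,4,6\}$; and $f(u_i)\ne f(u_{i+\delta})$ for $\delta\in\{5,7,9\}$. For a fixed $n$ this is a list of $8n$ elementary comparisons, including those straddling the seam of the sequence, and I would carry them out with the same computer search that produced the sequences; as $n\le 69$ throughout, this is entirely routine. There is no real obstacle beyond bookkeeping: one must treat every sequence cyclically (all constraints wrap around), must not forget the shifts of size $4,6,7,9$ alongside the small ones, and should bear in mind that the genuinely hard work — \emph{finding} seven periodic labelings of span $13$ for these inconvenient residues of $n$ — is exactly what the search does, the proof merely recording and re-verifying its output.
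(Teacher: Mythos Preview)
Your proposal is correct and follows the same approach as the paper: the paper's entire proof is the sentence ``All the patterns were obtained and verified by a computer search'' together with the remark that the labels lie in $\{0,\dots,13\}$. Your write-up is simply a more explicit version of that same computer check, spelling out the distance classes $\{1,3\}$, $\{2,4,6\}$, $\{5,7,9\}$ and the $8n$ comparisons to be made, which the paper leaves implicit.
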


\begin{proof}
All the patterns were obtained and verified by a computer search. Obviously, in each of the pattern, the maximum value is $13$ while the minimum value is $0$, hence $\lambda_{(3,2,1)}(G)\leq 13$ for every $n$ listed in the assumptions.
\end{proof}

\begin{theorem} \label{THM 13 odd}
Let $n\in \mathbb{N}$, $n\geq 51$ be odd, and let $G=C_n(1,3)$. Then $ \lambda_{(3,2,1)}(G) \leq 13$.
\end{theorem}

\begin{proof}
In the proof, we will consider possibilities depending on the remainder $r$ after division of $n$ by $10$. Let $n=10k+g(r)$ for some $k\in\mathbb{N}$, where $g(1)=51$, $g(3)=53$, $g(5)=55$, $g(7)=67$ and $g(9)=69$. (Note that, for $n=57$ and $n=59$, the statement is true by Observation \ref{obs C13 vzory odd}.) In each of the cases, we label vertices of $G$ in such a way that we copy one of the patterns for $C_{10}(1,3)$ given in Proposition \ref{prop C13 upper} $k$ times on vertices $u_1,u_2,\dots, u_{10k}$ and then use the pattern given in Observation \ref{obs C13 vzory odd} for relevant $g(r)$. From Proposition \ref{prop C13 upper}, it follows that the only possible collision in the defined labeling of $G$ could be between some vertex labeled by the pattern of length $10$ and some vertex labeled by the pattern of length $g(r)$. Specifically, it suffices to show that there is no collision between any vertex from $\{u_1,u_2,\dots, \, u_9\}$ and from $\{ u_{n-8}, u_{n-7}, \dots, \, u_{n}\}$ and between any vertex from $\{ u_{10k-8}, u_{10k-7}, \dots, \, u_{10k}\}$ and from $\{u_{10k+1}, u_{10k+2},\dots, \, u_{10k+9}\}$, since $\dist_G(u_i,u_j)>3$ for every $i,j$ with $\vert i-j\vert>9$ and $\vert i-j\vert <n-9$.
Now we distinguish the following cases:

\begin{enumerate}

\item[{\textbf Case 1:}\hspace*{-0.7cm}] \hspace{0.7cm}{ $r=1$}.\\
We use the pattern $0,3,10,13,6,1,4,9,12,7,$ as the pattern of length $10$. We have the following sequences of labels (in the first line, there is the pattern of length $10$ followed by the pattern for $n=51$; in the second line, the patterns are switched). Note that the symbol $|$ means the border between the patterns.
\begin{equation*}
\dots,3,10,13,6,1,4,9,12,7, \,\,\vert \,\,  0,3,10,13,6,1,4,9,12, \dots 
\end{equation*}
\begin{equation*}
\dots,3,8,11,6,1,4,9,12,7, \,\, \vert \,\, 0,3,10,13,6,1,4,9,12, \dots  
\end{equation*}

\item[{\textbf Case 2:}\hspace*{-0.7cm}] \hspace{0.7cm}{ $r=3$}.\\
We use the pattern $0,7,12,3,10,1,6,13,4,9,$ as the pattern of length $10$. We have the following sequences of labels (in the first line, there is the pattern of length $10$ followed by the pattern for $n=53$; in the second line, the patterns are switched). Note that the symbol $|$ means the border between the patterns.
\begin{equation*}
\dots, 7,12,3,10,1,6,13,4,9, \,\,\vert \,\, 0,7,12,5,10,1,8,13,6, \dots 
\end{equation*}
\begin{equation*}
\dots,3,10,5,8,13,2,11,4,9, \,\, \vert \,\, 0,7,12,3,10,1,6,13,4, \dots  
\end{equation*}

\item[{\textbf Case 3:}\hspace*{-0.7cm}] \hspace{0.7cm} {$r=5$}.\\
We use the pattern $0,3,6,9,12,1,4,7,10,13,$ as the pattern of length $10$. We have the following sequences of labels (in the first line, there is the pattern of length $10$ followed by the pattern for $n=55$; in the second line, the patterns are switched). 
\begin{equation*}
\dots, 3,6,9,12,1,4,7,10,13, \,\,\vert \,\, 0,3,8,11,6,1,4,9,12, \dots 
\end{equation*}
\begin{equation*}
\dots, 1,4,9,12,7,2,5,10,13, \,\, \vert \,\, 0,3,6,9,12,1,4,7,10, \dots  
\end{equation*}

\item[{\textbf Case 4:}\hspace*{-0.7cm}] \hspace{0.7cm} {$r=7$}.\\
Now we use the pattern $0,3,6,9,12,1,4,7,10,13,$ as the pattern of length $10$. We have the following sequences of labels (in the first line, there is the pattern of length $10$ followed by the pattern for $n=67$; in the second line, the patterns are switched).
\begin{equation*}
\dots,3,6,9,12,1,4,7,10,13, \,\,| \,\, 0,3,6,9,12,1,4,7,10, \dots  
\end{equation*}
\begin{equation*}
\dots,9,4,7,12,1,10,5,8,13, \,\, | \,\, 0,3,6,9,12,1,4,7,10, \dots  
\end{equation*}

\item[{\textbf Case 5:}\hspace*{-0.7cm}] \hspace{0.7cm} {$r=9$}.\\
In this case we use the pattern $0,3,6,9,12,1,4,7,10,13$ as the pattern of length $10$. We have the following sequences of labels (in the first line, there is the pattern of length $10$ followed by the pattern for $n=69$; in the second line, the patterns are switched).
\begin{equation*}
\dots,3,6,9,12,1,4,7,10,13, \,\,| \,\, 0,3,6,9,12,1,4,7,10 \dots  
\end{equation*}
\begin{equation*}
\dots, 5,12,7,10,1,4,13,8,11, \,\, | \,\, 0,3,6,9,12,1,4,7,10, \dots  
\end{equation*}

\end{enumerate}

In each of the cases, two vertices of $G$ are adjacent if and only if two numbers in any of the above listed sequences are next to each other or they have exactly two number in-between. Also, two vertices of $G$ are at distance $2$ if and only if two numbers in any of the above listed sequences have exactly one, three or five numbers in-between. And, two vertices of $G$ are at distance $3$ if and only if the numbers in the listed sequences have exactly four, six or eight numbers in-between. Clearly, in every sequence, labels of each pair of adjacent vertices differ by at least $3$, and labels of each pair of vertices at distance $2$ apart differ by at least $2$. For each $u,v\in V(G)$ with $\dist_G(u,v)=3$, labels of $u$ and $v$ are distinct since any pair of the same value in each of the sequences has at least $9$ other values in-between. Therefore the defined labeling is an $L(3,2,1)$-labeling of $G$ in each of the cases.
\end{proof}

We finish this section by summarizing all results presented in Proposition \ref{C13 lower} and Theorems \ref{THM 13 11}, \ref{THM 13 even}, \ref{THM 13 even better}
 and \ref{THM 13 odd}. Note that the presented bounds and exact values are supplemented by Tables \ref{table_1_3} and \ref{table 13 odd} for small values of $n$.
 
\begin{corollary}
Let $n\in \mathbb{N}$, %$n\geq 7$, 
and let $G=C_n(1,3)$. Then
$$
\begin{array}{rl}\lambda_{(3,2,1)}(G) =11 & \mbox{when } n \mbox{ is divisible by } 12, \\
11\leq \lambda_{(3,2,1)}(G) \leq 13 & \mbox{when } n\geq 18 \mbox{ is even}, \\
11\leq \lambda_{(3,2,1)}(G) \leq 12 & \mbox{when } n\geq 142 \mbox{ is even}, \\
11\leq \lambda_{(3,2,1)}(G) \leq 13 & \mbox{when } n\geq 51 \mbox{ is odd}.  \\
\end{array}
$$
\end{corollary}

\section{Circulants $C_n(1,4)$}
For such circulants, it make sense to consider $n\geq 9$. First we give a lower bound on $\lambda_{(3,2,1)}(G)$.

\begin{proposition}
\label{PROP C14 lower}
Let $n \in \mathbb{N}, n \geq 9$, and $G=C_n(1,4)$. Then $\lambda_{(3,2,1)}(G) \geq 15$.
\end{proposition}

\begin{proof}
For the distance graph $G(1,4)$, using a computer check we found that $\lambda_{(3,2,1)}(G(1,4)) \geq 15$. We tried to label vertices of $G(1,4)$ with labels $0,1,\dots, 14$, but the programme was able to label only $13$ consecutive vertices of $G(1,4)$ using these labels. The programme began by assigning label $0$ to a fixed vertex $i$ of $G(1,4)$ and then attempted to extend this labeling to vertices $i+1,i+2,i+3,\dots$ Note that such a vertex $i$ must exist, since otherwise all labels could be decreased uniformly, reducing the span. 

Let $G_1$ denote a subgraph of $G(1,4)$ induced by vertices $1,2,\dots, 14$. We know that $G_1$ has no $L(3,2,1)$-labeling using labels $0,1,\dots, 14$ such that vertex $1$ has label $0$. Since every $C_n(1,4)$ ($n>13$) contains $G_1$ as a subgraph, we have $\lambda_{(3,2,1)}(G)\geq 15$ for every $n>13$. From Table \ref{table_1_4}, the lower bound $15$ is valid also for $n \in \{9,10,11,12,13\}$.
\end{proof}

For some specific values of $n$, the lower bound from Proposition $\ref{PROP C14 lower}$ is the exact value of $\lambda_{(3,2,1)}$.

\begin{theorem}
\label{THM C14 presne}
Let $k,n \in \mathbb{N}$ and $G=C_n(1,4)$. If $n=16k$, then $\lambda_{(3,2,1)} (G) = 15$.
\end{theorem}

\begin{proof}
The pattern $0,5,10,15,3,8,13,1,6,11,4,9,14,2,7,12$ forms an $L(3,2,1)$-labeling of $C_{16}(1,4)$ (see Fig. \ref{FIG C14}). Hence, for $n=16$, we have $\lambda_{(3,2,1)}(G)=15$.

Take this pattern and copy it $k$ times along the whole $G$. We show that such a labeling is an $L(3,2,1)$-labeling of $G$. For each vertex $u$ of $G$, all neighbours of $u$ in $G$ have exactly the same labels as in $C_{16}(1,4)$. Analogously, vertices at distance $2$ from $u$ have the same labels in $G$ as in $C_{16}(1,4)$. For every $u_i,u_j\in V(G)$ with $\dist_G(u_i,u_j)=3$ we have $|j-i|\leq 12$ or $|j-i|\geq n-12$. Since the length of the repetitive pattern is $16$ and no label is repeated in this pattern, $f(u_i)\neq f(u_j)$. Thus the defined labeling is an $L(3,2,1)$-labeling of $G$. Together with Proposition $\ref{PROP C14 lower}$, we have $\lambda_{(3,2,1)}(G)=15$.
\end{proof}

\begin{figure}[ht]
  \centering
  \includegraphics[width=0.45\textwidth]{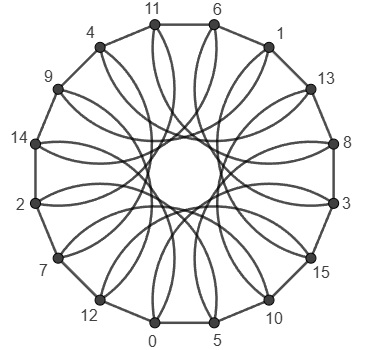}
%  \captionsetup{justification=centering}
  \caption{The graph $C_{16}(1,4)$ labeled using values from $0,1,\dots,15$.}
  \label{FIG C14}
\end{figure}

We continue with a table of exact values of $\lambda_{(3,2,1)}(C_n(1,4))$ for small values of $n$ (see Table $\ref{table_1_4}$). These exact values were obtained by computer experiments. We determined that the span of feasible labelings of such graphs cannot be reduced, and the reported values were confirmed through the identification and verification of relevant patterns (see Appendix).

For proving a general upper bound on $\lambda_{(3,2,1)}$ of $C_n(1,4)$, we will use the following two auxiliary statements.

\begin{table}
\centering
\begin{tabular}{|c|c||c|c||c|c|}

  \hline
  $n$ & $\lambda_{(3,2,1)}(G)$ & $n$ & $\lambda_{(3,2,1)}(G)$ & $n$ & $\lambda_{(3,2,1)}(G)$\\
  \hline \hline
  9 & 16 & 25 & 18 & 41 & 17\\
  10 & 18 & 26 & 17 & 42 & 16\\
  11 & 20 & 27 & 17 & 43 & 17\\
  12 & 16 & 28 & 17 & 44 & 17\\
  13 & 18 & 29 & 17 & 45 & 16\\
  14 & 19 & 30 & 16 & 46 & 16\\
  15 & 16 & 31 & 16 & 47 & 16\\
  16 & 15 & 32 & 15 & 48 & 15\\
  17 & 16 & 33 & 16 & 49 & 16\\
  18 & 17 & 34 & 16 & 50 & 16\\
  19 & 18 & 35 & 17 & 51 & 16\\
  20 & 18 & 36 & 17 & 52 & 17\\
  21 & 20 & 37 & 18 & 53 & 17\\
  22 & 20 & 38 & 17 & 54 & 17\\
  23 & 18 & 39 & 17 & 55 & 17\\
  24 & 18 & 40 & 17 & 56 & 17\\
  \hline
\end{tabular}
\caption{The exact values of $\lambda_{(3,2,1)}(G)$ for circulant $G=C_n(1,4)$ with $n\leq56$}
\label{table_1_4}
\end{table}

\begin{observ}
\label{C14 dlouhe vzory vyjimka}
Let $G=C_{n}(1,4)$, where $n\in \{60,69,70,71\}$. Then $\lambda_{(3,2,1)}(G)\leq 16$ and the following sequences of labels form relevant $L(3,2,1)$-labelings:

\begin{equation*}
\begin{array}{ll}
n & \mbox{pattern} \\
60 &  0, 6, 12, 2, 8, 14, 4, 10, 16, 1, 7, 13, 3, 9, 15, 0, 6, 12, 2, 8, 14, 4, 10, 16, 1, 7, 13, 3, \\ & 9, 15, 0, 6, 12, 2, 8, 14, 4, 10, 16, 1, 7, 13, 3, 9, 15, 0, 6, 12, 2, 8, 14, 4, 10, 16, 1, \\ & 7, 13, 3, 9, 15;\\

69 &  0, 5, 11, 16, 3, 8, 14, 1, 6, 12, 4, 9, 15, 0, 7, 13, 2, 10, 16, 5, 8, 14, 3, 11, 0, 6, 9, 15, \\ & 4, 12, 1, 7, 10, 16, 5, 13, 2, 8, 11, 0, 6, 14, 3, 9, 16, 1, 7, 12, 4, 10, 15, 2, 8, 13, 0, \\ & 5, 11, 16, 3, 14, 1, 6, 12,  8, 4, 10, 15, 2, 13;\\

70 &  0, 3, 8, 13, 16, 6, 11, 1, 9, 14, 4, 7, 12, 2, 16, 10, 0, 5, 8, 13, 3, 15, 11, 1, 6, 9, 4, \\ & 16, 12, 2, 14, 7, 0, 5, 10, 3, 15, 8, 1, 13, 6, 11, 16, 9, 2, 14, 4, 0, 12, 7, 10, 15, 5, 1, \\ & 13, 3, 8, 11, 16, 6, 0, 14, 4, 9, 12, 2, 7, 15, 5, 10;\\

71 & 0, 3, 10, 15, 5, 8, 13, 1, 11, 16, 4, 9, 14, 2, 7, 12, 0, 5, 10, 16, 3, 8, 14, 1, 6, 12, 4,\\ & 9, 15, 0, 7, 13, 2, 10, 16, 5, 8, 14, 3, 0, 12, 6, 9, 15, 4, 1, 13, 7, 10, 16, 5, 2, 14, 8, \\ & 0, 12, 6, 3, 10, 15, 1, 13, 7, 4, 11, 16, 2, 9, 14, 6, 12.
\end{array}
\end{equation*}

\end{observ}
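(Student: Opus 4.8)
The Observation makes two assertions: that $\lambda_{(3,2,1)}(G)\le 16$, and that each displayed sequence is an $L(3,2,1)$-labeling of the corresponding circulant. The plan is to establish the second assertion directly; the first is then free, since every entry in every listed sequence lies in $\{0,1,\dots,16\}$, so the span is at most $16$. A preliminary remark simplifies the task: each sequence has length exactly $n=|V(G)|$, so reading it as $f(u_1),\dots,f(u_n)$ with indices taken modulo $n$ is automatically a pattern in the sense of the paper, and there is nothing to verify about periodicity --- only the three $L(3,2,1)$ distance constraints on $f$.

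So the first real step is to pin down the local metric of $H_n:=C_n(\{1,4,n-4,n-1\})$. The neighbours of $u_i$ are $u_{i\pm1}$ and $u_{i\pm4}$, so enumerating all sums of at most three terms from $\{\pm1,\pm4\}$ yields: index gap $1$ or $4$ occurs at distance $1$; gaps $2,3,5,8$ at distance $2$; gaps $6,7,9,12$ at distance $3$ (gaps $10,11$ are already at distance $\ge4$), all gaps read modulo $n$. I also need that for $n\in\{60,69,70,71\}$ these ``linear'' values are the true distances: travelling the long way around covers at least $n-12\ge 48$ units in steps of size at most $4$, hence uses more than three edges, so no gap $d$ with $1\le d\le 12$ can be shorter than listed (a much smaller lower bound on $n$ would already do). Thus, for these $n$, checking that a sequence $f$ is an $L(3,2,1)$-labeling of $H_n$ reduces to verifying, for every $i$, the ten inequalities $|f(u_i)-f(u_{i+d})|\ge 3$ for $d\in\{1,4\}$, $|f(u_i)-f(u_{i+d})|\ge 2$ for $d\in\{2,3,5,8\}$, and $f(u_i)\ne f(u_{i+d})$ for $d\in\{6,7,9,12\}$.

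With this reduction the remainder is a finite, routine verification: for each of the four graphs one lets $i$ run over $\{1,\dots,n\}$ and tests the ten comparisons above --- at most $10n\le 710$ of them. The implementation I would use in practice, and presumably the intended one (cf.\ the analogous earlier observations), is a short computer check; a hand-checkable alternative is to exploit the near-periodic structure of the entries (e.g.\ the $n=60$ sequence is a $15$-term block repeated four times) and to tabulate, for each $d\in\{1,2,3,4,5,6,7,8,9,12\}$, the finitely many gap-$d$ pairs that occur, which makes the ten constraints visibly satisfied.

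There is no mathematical obstacle of substance here; the one step that needs genuine care is the distance lemma --- correctly reading off the distance-$1,2,3$ gap sets from the generator set $\{1,4\}$, and confirming that $n$ is large enough that no short cycle collapses a distance $\le 3$. Everything after that is bookkeeping, and the only real risk is a transcription error in the long patterns or an overlooked residue class in the case analysis.
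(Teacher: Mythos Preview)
Your proposal is correct and follows essentially the same approach as the paper: the paper's proof simply states that the patterns were obtained and verified by computer search and observes that the maximum label used is $16$. Your write-up is more informative in that it spells out exactly what the verification amounts to (the distance-$1,2,3$ index gaps $\{1,4\}$, $\{2,3,5,8\}$, $\{6,7,9,12\}$ and the fact that $n\ge 60$ rules out wrap-around shortcuts), but the underlying method is identical.
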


\begin{proof}
All the patterns were obtained and verified by a computer search. Obviously, in each of the pattern, the maximum value is $16$ while the minimum value is $0$, hence $\lambda_{(3,2,1)}(G)\leq 16$ for every $n$ listed in the assumptions.
\end{proof}

\begin{observ}
\label{C14 dlouhe vzory}
Let $G=C_{n}(1,4)$, where $n\in \{17,31,34,42,46,51,57,59,61,\\ 68,72,76,85,86,87\}$. Then $\lambda_{(3,2,1)}(G)\leq 16$ and the following sequences of labels form relevant $L(3,2,1)$-labelings:

\begin{equation*}
\begin{array}{ll}
n & \mbox{pattern} \\
17 & 0, 5, 10, 15, 3, 8, 13, 1, 6, 11, 16, 4, 9, 14, 2, 7, 12;\\

31 & 0, 5, 10, 15, 3, 8, 13, 1, 6, 11, 4, 9, 14, 2, 7, 12, 0, 5, 10, 3, 8, 13, 1, 6, 11, 16, 4, 9, \\ & 2, 7, 12; \\

34 & 0, 5, 10, 15, 3, 8, 13, 1, 6, 11, 16, 4, 9, 14, 2, 7, 12, 0, 5, 10, 15, 3, 8, 13, 1, 6, 11, \\ & 16, 4, 9, 14, 2, 7, 12;\\

42 & 0, 5, 10, 3, 16, 8, 1, 14, 6, 12, 4, 9, 2, 15, 7, 0, 13, 5, 11, 3, 16, 9, 1, 14, 7, 12, 4, 10,\\ &  2, 15, 8, 0, 13, 6, 11, 16, 4, 9, 14, 2, 7, 12;\\

46 & 0, 5, 10, 15, 3, 8, 13, 1, 6, 11, 4, 9, 14, 2, 7, 12, 0, 5, 10, 3, 8, 13, 1, 6, 11, 16, 4, 9, \\ & 2, 7, 12, 0, 5, 10, 15, 3, 8, 1, 6, 11, 14, 4, 9, 16, 2, 7;\\

51 & 0, 5, 10, 15, 3, 8, 13, 1, 6, 11, 16, 4, 9, 14, 2, 7, 12, 0, 5, 10, 15, 3, 8, 13, 1, 6, 11,\\ &  16, 4, 9, 14, 2, 7, 12, 0, 5, 10, 15, 3, 8, 13, 1, 6, 11, 16, 4, 9, 14, 2, 7, 12; \\
%\end{array}
%\end{equation*}
%
%\begin{equation*}
%\begin{array}{ll}
%n & \mbox{pattern} \\
57 & 0, 5, 10, 15, 3, 8, 13, 1, 6, 11, 4, 9, 14, 2, 7, 12, 0, 5, 10, 3, 16, 8, 1, 14, 6, 12, 4, 9, 2, \\ & 15, 7, 0, 13, 5, 11, 3, 16, 9, 1, 14, 7, 12, 4, 10, 2, 15, 8, 0, 13, 6, 11, 16, 4, 9, 14, 2, 7;\\

59 & 0, 5, 10, 3, 16, 8, 1, 14, 6, 12, 4, 9, 2, 15, 7, 0, 13, 5, 11, 3, 16, 9, 1, 14, 7, 12, 4, 10, \\ &  2, 15, 8, 0, 13, 6, 11, 16, 4, 9, 14, 2, 7, 12, 0, 5, 10, 15, 3, 8, 13, 1, 6, 11, 16, 4, 9, \\ & 14, 2, 7, 12;\\

61 & 0, 5, 10, 15, 3, 8, 13, 1, 6, 11, 4, 9, 14, 2, 7, 12, 0, 5, 10, 3, 8, 13, 1, 6, 11, 16, 4, 9, \\ & 2, 7, 12, 0, 5, 10, 15, 3, 8, 1, 6, 11, 16, 4, 9, 14, 2, 12, 0, 5, 10, 15, 3, 8, 13, 6, 11, \\ & 16, 4, 9, 14, 2, 7;\\

68 & 0, 5, 10, 15, 3, 8, 13, 1, 6, 11, 16, 4, 9, 14, 2, 7, 12, 0, 5, 10, 15, 3, 8, 13, 1, 6, 11, 16,\\ & 4, 9, 14, 2, 
 7, 12, 0, 5, 10, 15, 3, 8, 13, 1, 6, 11, 16, 4, 9, 14, 2, 7, 12, 0, 5, 10, 15, \\ & 3, 8, 13, 1, 6, 11, 16, 4, 9, 14, 2, 7, 12,\\

72 & 0, 5, 10, 15, 3, 8, 13, 1, 6, 11, 4, 9, 14, 2, 7, 12, 0, 5, 15, 3, 8, 13, 1, 6, 11, 16, 9, 14, \\ & 2, 7, 12, 0, 5, 10,
 3, 16, 8, 1, 14, 6, 12, 4, 9, 2, 15, 7, 0, 13, 5, 11, 3, 16, 9, 1, 14, 7, \\ & 12, 4, 10, 2, 15, 8, 0, 13,  6, 11, 16, 4, 9, 14, 2, 7;\\

76 & 0, 5, 10, 15, 3, 8, 13, 1, 6, 11, 4, 9, 14, 2, 7, 12, 0, 5, 10, 3, 8, 13, 1, 6, 11, 16, 4, 9, 2, \\ & 7, 12, 0, 5, 10,
 15, 3, 13, 1, 6, 11, 16, 4, 9, 14, 7, 12, 0, 5, 10, 15, 3, 8, 1, 6, 11, 16, \\ & 4, 9, 14, 2, 12, 0, 5, 10, 15, 3, 8, 13, 6, 11, 16, 4, 9, 14, 2, 7;\\

85 & 0, 5, 10, 15, 3, 8, 13, 1, 6, 11, 16, 4, 9, 14, 2, 7, 12, 0, 5, 10, 15, 3, 8, 13, 1, 6, 11, 16, \\ & 4, 9, 14, 2,
 7, 12, 0, 5, 10, 15, 3, 8, 13, 1, 6, 11, 16, 4, 9, 14, 2, 7, 12, 0, 5, 10, 15, \\ & 3, 8, 13, 1, 6, 11, 16, 4, 9, 14, 2, 7, 12, 0, 5, 10, 15, 3, 8, 13, 1, 6, 11, 16, 4, 9, 14, 2, \\ &  7, 12;\\

86 & 0, 5, 10, 16, 3, 8, 14, 1, 6, 12, 4, 9, 15, 0, 7, 13, 2, 10, 16, 5, 8, 14, 3, 0, 12, 6, 9, 15, \\ & 4, 1, 13, 7,
 10, 16, 5, 2, 14, 8, 0, 12, 6, 3, 15, 9, 1, 13, 7, 4, 11, 16, 2, 14, 8, 5, 12, \\ & 0, 3, 10, 15, 7, 13, 1, 4, 11, 16, 6, 9, 14, 2, 12, 0, 5, 10, 15, 3, 8, 13, 1, 6, 11, 4, 9, 14, \\ & 2, 7, 12;\\
 
%\end{array}
%\end{equation*}
%
%\begin{equation*}
%\begin{array}{ll}
%n & \mbox{pattern} \\

87 & 0, 5, 10, 15, 8, 13, 1, 4, 11, 16, 6, 9, 14, 3, 12, 0, 5, 10, 15, 7, 2, 13, 4, 11, 16, 8, 0, \\ &  14, 6, 3, 12,  
 9, 1, 15, 7, 4, 13, 10, 2, 16, 8, 0, 5, 12, 3, 14, 9, 1, 6, 11, 16, 4, 13, 2, 7, \\ & 10, 0, 5, 12, 15, 3, 8, 1, 6, 11, 14, 4, 9, 16, 2, 7, 0, 5, 10, 15, 3, 8, 13, 1, 6, 11, 4, 9, \\ &  14, 2, 7, 12.\\

\end{array}
\end{equation*}

\end{observ}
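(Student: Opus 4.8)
The plan is to treat the statement as a certificate-verification problem: for each of the fifteen listed values of $n$ one checks that the displayed sequence of $n$ integers is genuinely an $L(3,2,1)$-labeling of $C_n(\{1,4,n-4,n-1\})$, and then reads off its span. Each displayed sequence uses only labels from $\{0,1,\dots,16\}$ and attains both $0$ and $16$, so once the labeling property is confirmed the span equals exactly $16$, giving $\lambda_{(3,2,1)}(G)\le 16$. A preliminary sanity check is that each displayed sequence indeed has length $n$, so that it really is a labeling of the whole vertex set $\{u_1,\dots,u_n\}$.

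To make the verification manageable I would first use that $C_n(S)$ is vertex-transitive and that $\dist_G(u_i,u_j)$ depends only on the offset $t\equiv j-i\pmod n$; write $\delta(t)$ for this common value, so $\delta(t)=\delta(n-t)$. It then suffices to check, for every offset $t\in\{1,\dots,n-1\}$ with $\delta(t)\le 3$, that $\min_{1\le i\le n}\lvert f(u_i)-f(u_{i+t})\rvert > 3-\delta(t)$, indices read modulo $n$ with $f$ the periodic extension of the pattern. Since the connection set is $\{1,4,n-4,n-1\}$, a short enumeration of the sums of at most three terms from $\{\pm 1,\pm 4\}$, reduced modulo $n$, identifies exactly which offsets have $\delta(t)=1,2,3$: for $n$ large one finds $\delta^{-1}(1)=\{1,4,n-4,n-1\}$, $\delta^{-1}(2)=\{2,3,5,8,n-8,n-5,n-3,n-2\}$ and $\delta^{-1}(3)=\{6,7,9,12,n-12,n-9,n-7,n-6\}$. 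For the smaller members of the list --- most conspicuously $n=17$, where $G$ already has diameter $3$ --- some of these residues merge and the enumeration must be redone case by case; the only effect of wrap-around, however, is to make some offsets \emph{closer}, which can delete constraints but never create new ones.

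With the relevant offsets in hand, the remaining work is a purely mechanical comparison: for each $n\le 87$ there are only a few dozen offsets $t$ with $\delta(t)\le 3$, and for each one $n$ cyclic shifts of the pattern are compared, so the whole check is $O(n^2)$ integer comparisons per value of $n$ --- exactly the sort of bounded, local computation (every constraint involves two vertices at distance at most three) that is appropriate to delegate to a computer search. I would display one or two cases in full; for instance when $n=17$ the displayed sequence is a permutation of $\{0,\dots,16\}$, so all distance-$3$ constraints are automatic and one is left only with checking that labels at offsets $1$ and $4$ differ by at least $3$ and labels at offsets $2,3,5,8$ differ by at least $2$. The remaining fourteen cases are entirely analogous.

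The honest assessment is that there is no structural obstacle here: the patterns came from a computer search, and the content of the Observation is precisely that the listed certificates pass the finite test above, so the ``hard part'' is only the bookkeeping across fifteen values of $n$ and the care needed with wrap-around for the small ones. For context I would note that Proposition~\ref{dolni_odhad_S4_4} already gives $\lambda_{(3,2,1)}(G)\ge 15$ for all these $n$, so the exhibited patterns pin $\lambda_{(3,2,1)}(G)$ down to within one; the Observation as stated commits only to the upper bound $16$.
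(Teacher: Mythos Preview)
Your proposal is correct and takes essentially the same approach as the paper: the paper's proof consists of the two sentences ``All the patterns were obtained and verified by a computer search. Obviously, in each of the patterns, the maximum value is $16$ while the minimum value is $0$, hence $\lambda_{(3,2,1)}(G)\leq 16$.'' You have simply spelled out explicitly what that computer verification entails (offset-by-offset checking using the vertex-transitivity of circulants), which is a reasonable elaboration but not a different argument.
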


\begin{proof}
All the patterns were obtained and verified by a computer search. Obviously, in each of the pattern, the maximum value is $16$ while the minimum value is $0$, hence $\lambda_{(3,2,1)}(G)\leq 16$ for every $n$ listed in the assumptions.
\end{proof}

\begin{theorem}
\label{THM C14 upper}
Let $n\in \mathbb{N}$ and $G=C_n(1,4)$. If $n\geq 57$, then
$\lambda_{(3,2,1)}(G) \leq 16.$
\end{theorem}

\begin{proof}
We use the pattern from the proof of Theorem $\ref{THM C14 presne}$, i.e., pattern $P=(0,5,10,15,3,8,13,1,6,11,4,9,14,2,7,12)$. In the proof, we will distinguish possibilities depending on the remainder $r$ after division of $n$ by $16$. Let $n=16k+g(r)$ for some $k\in\mathbb{N}\cup\{0\}$, where $g(0)=0$, $g(1)=17$, $g(2)=34$, $g(3)=51$, $g(4)=68$, $g(5)=85$, $g(6)=86$, $g(7)=87$, $g(8)=72$, $g(9)=57$, $g(10)=42$, $g(11)=59$, $g(12)=76$, $g(13)=61$, $g(14)=46$ and $g(15)=31$. In each of the cases, we label vertices of $G$ in such a way that we copy pattern $P$ $k$ times on vertices $u_1,u_2,\dots, u_{16k}$, and then, for the remaining vertices of $G$, we use the pattern given in Observation $\ref{C14 dlouhe vzory}$ for relevant $g(r)$. It follows that the only possible collision in the defined labeling of $G$ could be between some vertex labeled by pattern $P$ and some vertex labeled by the pattern of length $g(r)$. Specifically, it suffices to show that there is no collision between any vertex from $\{u_1,u_2,\dots, \, u_{12}, u_{16k-11}, u_{16k-10}, \dots, \, u_{16k}\}$ and any vertex from $\{u_{16k+1}, u_{16k+2},\dots, \, u_{16k+12}, u_{n-11}, u_{n-10}, \dots, \, u_{n}\}$, since for every $u_i,u_j\in V(G)$ with $\dist_G(u_i,u_j)\leq 3$ we have $|j-i|\leq 12$ or $|j-i|\geq n-12$.

Hence we distinguish the following cases depending on the remainder $r$ after division of $n$ by $16$. We have the following sequences of labels (in the first line, there is pattern $P$ followed by the pattern for $n'=g(r)$; in the second line, the patterns are switched) for $r\in\{0,\dots,15\}$. Note that the symbol $|$ means the border between the patterns.

\begin{enumerate}

\item[{\bf Case 1:}\hspace*{-0.7cm}] \hspace{0.7cm}{\sl $r=0$, i.e. $n'=16$}.

It follows from Theorem $\ref{THM C14 presne}$.

\item[{\bf Case 2:}\hspace*{-0.7cm}] \hspace{0.7cm}{\sl $r\in\{1,2,3,4,5\}$, i.e. $n'\in\{17,34,51,68,85\}$}.

\begin{equation*}
\dots,3,8,13,1,6,11,4,9,14,2,7,12, \,\,\vert \,\, 0, 5, 10, 15, 3, 8, 13, 1, 6, 11, 16, 4,  \dots 
\end{equation*}
\begin{equation*}
\dots,8, 13, 1, 6, 11, 16, 4, 9, 14, 2, 7, 12, \,\, \vert \,\, 0,5,10,15,3,8,13,1,6,11,4,9, \dots  
\end{equation*}

\item[{\bf Case 3:}\hspace*{-0.7cm}] \hspace{0.7cm}{\sl $r=6$, i.e. $n'=86$}.
\begin{equation*}
\dots,3,8,13,1,6,11,4,9,14,2,7,12, \,\,\vert \,\, 0, 5, 10, 16, 3, 8, 14, 1, 6, 12, 4, 9,  \dots 
\end{equation*}
\begin{equation*}
\dots,3, 8, 13, 1, 6, 11, 4, 9, 14, 2, 7, 12, \,\, \vert \,\, 0,5,10,15,3,8,13,1,6,11,4,9, \dots  
\end{equation*}

\item[{\bf Case 4:}\hspace*{-0.7cm}] \hspace{0.7cm}{\sl $r=7$, i.e. $n'=87$}.
\begin{equation*}
\dots,3,8,13,1,6,11,4,9,14,2,7,12, \,\,\vert \,\, 0, 5, 10, 15, 8, 13, 1, 4, 11, 16, 6, 9,  \dots 
\end{equation*}
\begin{equation*}
\dots,3, 8, 13, 1, 6, 11, 4, 9, 14, 2, 7, 12, \,\, \vert \,\, 0,5,10,15,3,8,13,1,6,11,4,9, \dots  
\end{equation*}

\item[{\bf Case 5:}\hspace*{-0.7cm}] \hspace{0.7cm}{\sl $r\in\{8,9\}$, i.e. $n'=72$ or $n'=57$, respectively}.
\begin{equation*}
\dots,3,8,13,1,6,11,4,9,14,2,7,12, \,\,\vert \,\, 0, 5, 10, 15, 3, 8, 13, 1, 6, 11, 4, 9,  \dots 
\end{equation*}
\begin{equation*}
\dots,15, 8, 0, 13, 6, 11, 16, 4, 9, 14, 2, 7, \,\, \vert \,\, 0,5,10,15,3,8,13,1,6,11,4,9, \dots  
\end{equation*}

\item[{\bf Case 6:}\hspace*{-0.7cm}] \hspace{0.7cm}{\sl $r=10$, i.e. $n'=42$}.
\begin{equation*}
\dots,3,8,13,1,6,11,4,9,14,2,7,12, \,\,\vert \,\, 0, 5, 10, 3, 16, 8, 1, 14, 6, 12, 4, 9,  \dots 
\end{equation*}
\begin{equation*}
\dots,8, 0, 13, 6, 11, 16, 4, 9, 14, 2, 7, 12, \,\, \vert \,\, 0,5,10,15,3,8,13,1,6,11,4,9, \dots  
\end{equation*}

\item[{\bf Case 7:}\hspace*{-0.7cm}] \hspace{0.7cm}{\sl $r=11$, i.e. $n'=59$}.
\begin{equation*}
\dots,3,8,13,1,6,11,4,9,14,2,7,12, \,\,\vert \,\, 0, 5, 10, 3, 16, 8, 1, 14, 6, 12, 4, 9,  \dots 
\end{equation*}
\begin{equation*}
\dots,8, 13, 1, 6, 11, 16, 4, 9, 14, 2, 7, 12, \,\, \vert \,\, 0,5,10,15,3,8,13,1,6,11,4,9, \dots  
\end{equation*}

\item[{\bf Case 8:}\hspace*{-0.7cm}] \hspace{0.7cm}{\sl $r\in\{12,13\}$, i.e. $n'=76$ or $n'=61$, respectively.}
\begin{equation*}
\dots,3,8,13,1,6,11,4,9,14,2,7,12, \,\,\vert \,\, 0, 5, 10, 15, 3, 8, 13, 1, 6, 11, 4, 9,  \dots 
\end{equation*}
\begin{equation*}
\dots,15, 3, 8, 13, 6, 11, 16, 4, 9, 14, 2, 7, \,\, \vert \,\, 0,5,10,15,3,8,13,1,6,11,4,9, \dots  
\end{equation*}

\item[{\bf Case 9:}\hspace*{-0.7cm}] \hspace{0.7cm}{\sl $r=14$, i.e. $n'=46$}.
\begin{equation*}
\dots,3,8,13,1,6,11,4,9,14,2,7,12, \,\,\vert \,\, 0, 5, 10, 15, 3, 8, 13, 1, 6, 11, 4, 9,  \dots 
\end{equation*}
\begin{equation*}
\dots,15, 3, 8, 1, 6, 11, 14, 4, 9, 16, 2, 7, \,\, \vert \,\, 0,5,10,15,3,8,13,1,6,11,4,9, \dots  
\end{equation*}

\item[{\bf Case 10:}\hspace*{-0.7cm}] \hspace{0.7cm}{\sl $r=15$, i.e. $n'=31$}.
\begin{equation*}
\dots,3,8,13,1,6,11,4,9,14,2,7,12, \,\,\vert \,\, 0, 5, 10, 15, 3, 8, 13, 1, 6, 11, 4, 9,  \dots 
\end{equation*}
\begin{equation*}
\dots,3, 8, 13, 1, 6, 11, 16, 4, 9, 2, 7, 12, \,\, \vert \,\, 0,5,10,15,3,8,13,1,6,11,4,9, \dots  
\end{equation*}

\end{enumerate}

In each of the cases, two vertices of $G$ are adjacent if and only if two numbers in any of the above listed sequences are next to each other or they have exactly three numbers in-between.
Also, two vertices of $G$ are at distance $2$ if and only if two numbers in any of the above listed sequences have exactly one, two, four or seven numbers in-between.
And, two vertices of $G$ are at distance $3$ if and only if the numbers in the listed sequences have exactly five, six, eight or eleven numbers in-between.

Clearly, in every sequence, labels of each pair of adjacent vertices differ by at least $3$, and labels of each pair of vertices at distance $2$ apart differ by at least $2$. For each $u,v\in V(G)$ with $\dist_G(u,v)=3$, labels of $u$ and $v$ are distinct since any pair of the same value in each of the sequences has $9, 10$ or more than $12$ other values in-between. Therefore the defined labeling is an $L(3,2,1)$-labeling of $G$ in each of the cases. Since we used all remainders after division by sixteen, we showed that statement holds for all $n\geq 57$ except $n\in\{60,69,70,71\}$. But together with Observation $\ref{C14 dlouhe vzory vyjimka}$ %and Theorem $\ref{S4_4_16}$ 
we have $\lambda_{(3,2,1)}(G)\leq 16$ for all $n\geq 57$. % and $\lambda_{(3,2,1)}(G)=15$ for $n=16k$.
\end{proof}

Analogously as for $C_n(1,3)$, we tested all $C_n(1,4)$ for $n\leq 500$ and we observed that $\lambda_{(3,2,1)}(C_n(1,4))=15$ if and only if $n$ is divisible by $16$. Thus we propose the following conjecture.

\begin{conjecture}
Let $n\in \mathbb{N}$ and $G=C_n(1,4)$. If $n\geq 57$ and is not divisible by $16$, then
$\lambda_{(3,2,1)}(G)=16$.
\end{conjecture}

We finish this section by summarizing results for $C_n(1,4)$ shown in Proposition \ref{PROP C14 lower} and Theorems \ref{THM C14 presne} and \ref{THM C14 upper}.

\begin{corollary}
Let $n\in \mathbb{N}$, %$n\geq 7$, 
and let $G=C_n(1,4)$. Then
\begin{equation*}
\begin{array}{rl}\lambda_{(3,2,1)}(G) =15 & \mbox{when } n \mbox{ is divisible by } 16, \\
15\leq \lambda_{(3,2,1)}(G) \leq 16 & \mbox{when } n\geq 57. 
\end{array}
\end{equation*}
\end{corollary}

\section{Circulants $C_n(1,5)$}

Obviously, $n\geq 11$. First we give a lower bound on $\lambda_{(3,2,1)}(G)$.

\begin{proposition}
\label{PROP C15 lower}
Let $n \in \mathbb{N}$, $n \geq 11$, and $G=C_n(1,5)$. Then $\lambda_{(3,2,1)}(G)\geq 13$.
\end{proposition}

\begin{proof}
For the distance graph $G(1,5)$, using a computer check we found that $\lambda_{(3,2,1)}(G(1,5)) \geq 13$. We tried to label vertices of $G(1,5)$ with labels $0,1,\dots, 12$, but the programme was able to label only $29$ consecutive vertices of $G(1,5)$ using these labels. The programme began by assigning label $0$ to a fixed vertex $i$ of $G(1,5)$ and then attempted to extend this labeling to vertices $i+1,i+2,i+3,\dots$ Note that such a vertex $i$ must exist, since otherwise all labels could be decreased uniformly, reducing the span. 

Let $G_1$ denote a subgraph of $G(1,5)$ induced by vertices $1,2,\dots, 30$. We know that $G_1$ has no $L(3,2,1)$-labeling using labels $0,1,\dots, 12$ such that vertex $1$ has label $0$. Since every $C_n(1,5)$ ($n>29$) contains $G_1$ as a subgraph, we have $\lambda_{(3,2,1)}(G)\geq 13$ for every $n>29$. From Tables $\ref{table_1_5_even}$ and $\ref{table_1_5_odd}$, the lower bound $13$ is valid also for every $n \in \{11,\dots,29\}$.
\end{proof}

First we deal with $n$ even. In Table $\ref{table_1_5_even}$, we list exact values of $\lambda_{(3,2,1)}(C_n(1,5))$ for even $n\leq 58$. These exact values were obtained by computer experiments. We determined that the span of feasible labelings of such graphs cannot be reduced, and the reported values were confirmed through the identification and verification of relevant patterns (see Appendix).

\begin{table}
\centering
\begin{tabular}{|c|c||c|c||c|c|}

  \hline
  $n$ & $\lambda_{(3,2,1)}(G)$ & $n$ & $\lambda_{(3,2,1)}(G)$ & $n$ & $\lambda_{(3,2,1)}(G)$\\
  \hline \hline
  12 & 13 & 28 & 13 & 44 & 13\\
  14 & 13 & 30 & 13 & 46 & 13\\
  16 & 15 & 32 & 14 & 48 & 13\\
  18 & 17 & 34 & 13 & 50 & 13\\
  20 & 13 & 36 & 13 & 52 & 13\\
  22 & 13 & 38 & 13 & 54 & 13\\
  24 & 13 & 40 & 13 & 56 & 13\\
  26 & 13 & 42 & 13 & 58 & 13\\
  \hline
\end{tabular}
\caption{The exact values of $\lambda_{(3,2,1)}(G)$ for circulant $G=C_n(1,5)$ with even $n\leq 58$}
\label{table_1_5_even}
\end{table}

The following theorem gives exact values on $\lambda_{(3,2,1)}$ of $C_n(1,5)$ with even $n$.

\begin{theorem}
\label{THM C15 even}
Let $n \in \mathbb{N}$, $n \geq 34$ even, and $G=C_n(1,5)$. Then $\lambda_{(3,2,1)}(G) =13$.
\end{theorem}

\begin{proof}
For $n\leq 58$, the statement follows from Table $\ref{table_1_5_even}$. Hence we assume that $n\geq 60$. By the Sylvester's theorem (Theorem \ref{sylvester}), for every $n'\geq 30$, there are nonnegative integers $k',\ell'$ such that $n'=6k'+7\ell'$. Therefore, for every even integer $n\geq 60$, there are positive integers $k,\ell$ such that $n=12k+14\ell$. 
We use the following two patterns, one of length $12$ (denoted by $P_1$) and one of length $14$ (denoted by $P_2$), both using values from $\{0,1,2,\dots, 13\}$ (see Fig. \ref{FIG C15}):

\begin{equation*}
P_1=(0,5,10,1,6,11,2,7,12,3,8,13); \hfill
P_2=(0,5,10,1,6,11,2,7,12,3,8,13,4,9).
\end{equation*}

\begin{figure}[ht]
  \centering
  \includegraphics[width=0.80\textwidth]{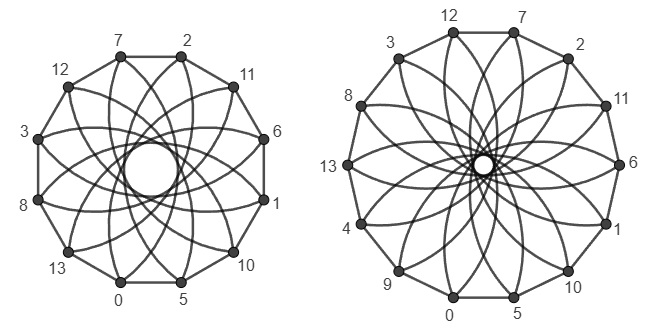}
%  \captionsetup{justification=centering}
  \caption{Two circulants $C_{12}(1,5)$ and $C_{14}(1,5)$ labeled using the values from $0$ to $13$.}
  \label{FIG C15}
\end{figure}

We define a labeling $f$ of the vertices of $G$ by repeating pattern $P_1$ $k$ times on the vertices $u_1, u_2, \dots, u_{12k}$, and then repeating pattern $P_2$ $\ell$ times on the vertices $u_{12k+1}, u_{12k+2}, \dots, u_n$.
First, we show that there is no collision between any pair of vertices among $u_1, \dots, u_{12k}$. Since no label is repeated within $P_1$ and since the indices of vertices among $u_1, \dots, u_{12k}$ that share the same label differ by a multiple of $12$, the distance between any pair of vertices from $u_1, \dots, u_{12k}$ with the same label is at least $4$. Therefore it suffices to verify the labeling between two consecutive copies of $P_1$. This yields the following sequence of labels (where the symbol $\vert$ denotes the boundary between the two copies of $P_1$):

\begin{equation*}
0,5,10,1,6,11,2,7,12,3,8,13 \;\vert\; 0,5,10,1,6,11,2,7,12,3,8,13.
\end{equation*}

It is straightforward to verify that this labeling satisfies all distance constraints of an $L(3,2,1)$-labeling. By analogous arguments, we conclude that there is no collision between any pair of vertices among $u_{12k+1}, u_{12k+2}, \dots, u_n$, where we obtain the following sequence of labels:

\begin{equation*}
0,5,10,1,6,11,2,7,12,3,8,13,4,9 \;\vert\; 0,5,10,1,6,11,2,7,12,3,8,13,4,9.
\end{equation*}

%We use two patterns presented in Proposition $\ref{prop C15 12_14}$. We define a labeling $f$ of the vertices of $G$ in such a way that we copy the pattern of length $12$ $k$-times on vertices $u_1,u_2,\dots, \, u_{12k}$ and then we copy the pattern of length $14$ $\ell$-times on vertices $u_{10k+1}, u_{10k+2}, \dots, \, u_n$. From Proposition \ref{prop C15 12_14}, there is no collision in the defined labeling between any pair of vertices from $u_1,u_2,\dots, \, u_{12k}$ and any pair of vertices from $u_{10k+1}, u_{10k+2}, \dots, \, u_n$. 
It remains to show that the patterns $P_1$ and $P_2$ can be combined into a feasible labeling of $G$. Again, since no label is repeated within $P_1$ and within $P_2$ and the indices of vertices sharing the same label differ by $12$ or $14$ or at least $24$, the distance between any pair of vertices of $G$ with the same label is at least $4$. Therefore it suffices to show that there is no collision between two consecutive patterns of distinct lengths. We have the following sequences of labels (in the first line, $P_1$ is followed by $P_2$; in the second line, the patterns are switched). Note that the symbol $\vert$ denotes the boundary between the patterns.

\begin{equation*}
0,5,10,1,6,11,2,7,12,3,8,13, \,\, \vert \,\, 0,5,10,1,6,11,2,7,12,3,8,13,4,9,
\end{equation*}

\begin{equation*}
0,5,10,1,6,11,2,7,12,3,8,13,4,9, \,\, \vert \,\, 0,5,10,1,6,11,2,7,12,3,8,13.
\end{equation*}

Obviously, $P_1$ is a subsequence of $P_2$, therefore we infer that there is no collision in the labeling $f$, and hence $f$ is an $L(3,2,1)$-labeling of $G$.
\end{proof}

Now we move on to odd $n$. In Table $\ref{table_1_5_odd}$, we list exact values of $\lambda_{(3,2,1)}(C_n(1,5))$ for odd $n\leq 45$. These values were obtained by a computer search. Analogously as for the previous tables, the programme verified that the values of $\lambda_{(3,2,1)}$ cannot be reduced by showing that the corresponding graph admits no labeling using values $0,\dots, \lambda_{(3,2,1)}-1$. On the other hand, for each $n$ listed in the table it gave a feasible labeling by a repetitive pattern which is presented Appendix. Due to an increasing computational time required for larger $n$ (exceeding one day) we were not able to find exact values for larger $n$. However, we prove the following general upper bound on $\lambda_{(3,2,1)}$  for $C_n(1,5)$.

\begin{table}

\centering
\begin{tabular}{|c|c||c|c||c|c|}

  \hline
  $n$ & $\lambda_{(3,2,1)}(G)$ & $n$ & $\lambda_{(3,2,1)}(G)$ & $n$ & $\lambda_{(3,2,1)}(G)$\\
  \hline \hline
  11 & 15 & 23 & 22 & 35 & 16 \\
  13 & 24 & 25 & 16 & 37 & 16 \\
  15 & 14 & 27 & 17 & 39 & 16 \\
  17 & 16 & 29 & 16 & 41 & 16 \\
  19 & 18 & 31 & 15 & 43 & 15 \\
  21 & 20 & 33 & 16 & 45 & 14 \\
  \hline
\end{tabular}
\caption{The exact values of $\lambda_{(3,2,1)}(G)$ for circulant $G=C_n(1,5)$ with odd $n\leq 45$}
\label{table_1_5_odd}
\end{table}

\begin{theorem} \label{THM C15 odd}
Let $n \in \mathbb{N}$, $n \geq 105$ odd, and $G=C_n(1,5)$. Then

\begin{equation*}
\lambda_{(3,2,1)}(G) \leq 14. 
\end{equation*}
\end{theorem}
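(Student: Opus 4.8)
The plan is to mirror the strategy used for the even case in Theorem~\ref{veta_15_suda} and for the odd cases of the previous two circulant families (Theorems~\ref{thm 13 odd} and~\ref{veta_1_4_hlavni}): build the labeling of $G=C_n(\{1,5,n-5,n-1\})$ for large odd $n$ by concatenating a short periodic ``bulk'' pattern (repeated many times) with one of finitely many ``tail'' patterns chosen according to the residue of $n$ modulo some base period $m$. First I would fix the bulk period: the natural candidate is the length-$12$ pattern $0,5,10,1,6,11,2,7,12,3,8,13$ from Proposition~\ref{prop C15 12_14} (which forms a feasible $L(3,2,1)$-labeling of $C_{12}(\{1,5,7,11\})$ using only labels $0,\dots,13$), but since the target span here is $14$ rather than $13$ there is more slack, so one might instead take $m=12$ with the same pattern or a slightly longer one. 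Writing $n=mk+g(r)$, where $r$ is the residue of $n$ modulo $m$ and $g(r)$ is a fixed representative large enough that $k\ge 1$, the task reduces to exhibiting, for each odd residue class, a tail pattern of length $g(r)$ that (i) uses only labels $0,\dots,14$, (ii) is itself a feasible $L(3,2,1)$-labeling of the corresponding small circulant $C_{g(r)}(\{1,5,g(r)-5,g(r)-1\})$, and (iii) agrees with the bulk pattern on enough of its initial and final segment that no conflict arises across either of the two ``seams''.

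The key steps, in order, are as follows. First, establish the adjacency-to-position dictionary for $C_n(\{1,5,n-5,n-1\})$: two vertices $u_i,u_j$ are adjacent iff $|i-j|\in\{1,5\}\pmod n$, at distance $2$ iff $|i-j|$ (reduced) lies in $\{2,4,6,10\}$ (being a sum/difference of two elements of $\{1,5\}$), and at distance $3$ iff it lies in $\{3,7,9,11,15\}$ — in particular every pair at distance $\le 3$ has position-difference at most $15$ or at least $n-15$. Second, record the finitely many tail patterns of lengths $g(r)$, one per odd residue class mod $m$, verified by computer to be feasible length-$g(r)$ labelings with span $\le 14$; this is the analogue of Observations~\ref{obs C13 10 odd}--\ref{obs C13 vzory odd} and~\ref{C14 dlouhe vzory}. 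Third, for each residue class exhibit the two seam sequences (bulk-then-tail, and tail-then-bulk, each written out to $15$ positions on each side of the border) and observe directly that every adjacent pair differs by $\ge 3$, every distance-$2$ pair by $\ge 2$, and every distance-$3$ pair is nonrepeated, using the dictionary from step one; since both seams involve only positions within distance $15$ of a border and vertices farther apart are at graph-distance $>3$, checking these windows suffices. Finally, combine: for $n=12k$ the span-$13$ labeling of Proposition~\ref{prop C15 12_14} already gives $\le 14$, and for the other classes the concatenation above gives $\lambda_{(3,2,1)}(G)\le 14$ for all odd $n\ge 105$.

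The main obstacle is step three — guaranteeing compatibility at the seams. Unlike the clean situation in Theorems~\ref{THM 13 even} and~\ref{veta_15_suda}, where the short pattern was literally an initial subsequence of the long one so the border was invisible, here the tail patterns for generic residues need not begin with $0,5,10,1,6,11,\dots$, so one must either engineer each tail so that its first $15$ and last $15$ entries match the bulk pattern's cyclic behaviour (spending the extra label $14$ as a ``buffer'' near the joins), or accept a small zoo of case-by-case seam checks as in Theorem~\ref{veta_1_4_hlavni}. A secondary nuisance is covering the residues for which the smallest feasible $g(r)$ with $k\ge1$ forces $n$ a bit above $105$ (mirroring the remarks after Theorems~\ref{thm 13 odd} and~\ref{veta_1_4_hlavni} about sporadic exceptions); these are absorbed by listing a few extra sporadic patterns in a companion observation. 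The lower bound is not at issue: the theorem only claims $\le 14$, and any matching lower bound of $14$ for, e.g., $n\equiv r\pmod{12}$ would be a separate computer-assisted argument in the spirit of Proposition~\ref{dolni_odhad_S4_5}.
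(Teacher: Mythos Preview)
Your strategy is sound and would work, but it is not the route the paper takes. You propose one even bulk period (length $12$) together with a separate odd tail $g(r)$ for each of the six odd residue classes modulo $12$, which means six tail patterns to find, six pairs of seams to check, and six compatibility verifications. The paper instead exploits the even case already in hand: it writes every odd $n\ge 105$ as $n=45+12k+14\ell$ (since $n-45$ is even and $\ge 60$, Sylvester applied to $6$ and $7$ gives the decomposition), and then concatenates a \emph{single} odd tail of length $45$ --- the labeling of $C_{45}(\{1,5,40,44\})$ with span exactly $14$ listed in the Appendix --- with the length-$12$ and length-$14$ patterns of Proposition~\ref{prop C15 12_14}. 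Because the $12$--$14$ seams were already handled in Theorem~\ref{veta_15_suda}, only the two new seams ($45$ against $12$, and $45$ against $14$) need to be inspected, and the length-$45$ pattern is chosen so that both its first $15$ and its last $15$ entries line up with the bulk patterns. So the paper trades your six-residue casework for one odd block plus a Sylvester argument; this buys a much shorter verification.

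Two minor points. First, your remark that ``for $n=12k$ the span-$13$ labeling already gives $\le 14$'' is irrelevant here: the theorem concerns odd $n$ only, so $n=12k$ never occurs. Second, your position-to-distance dictionary is correct (distance $\le 3$ forces position-difference at most $15$), and this is exactly what the paper uses to bound the seam window.
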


\begin{proof}
Let $n\geq 105$ be odd. From Theorem \ref{sylvester} and from the proof of Theorem \ref{THM C15 even} we know that, for every even integer $n'\geq 60$, there are two nonnegative integers $k',\ell'$ such that $n'=12k'+14\ell'$ and $\lambda_{(3,2,1)}(C_{n'}(1,5))=13$ for these $n'$. It follows that, for every odd integer $n\geq 105$, there are two nonnegative integers $k,\ell$ such that $n=45+12k+14\ell$. From Table $\ref{table_1_5_odd}$ we know that $\lambda_{(3,2,1)}(C_{45}(1,5))=14$. We use the two patterns presented in the proof of Theorem \ref{THM C15 even} - the patterns $P_1$ and $P_2$, and the pattern for a labeling of $C_{45}(1,5)$ (the appropriate pattern is presented in Appendix); we denote this pattern by $P_3$. We show that these patterns can be combined into a feasible labeling of $G$.

We define a labeling $f$ of the vertices of $G$ in such a way that we use pattern $P_3$ on vertices $u_1,u_2,\dots, \, u_{45}$, then we copy pattern $P_1$ $k$ times on vertices $u_{46},u_{47},\dots, \, u_{12k+45}$ and then we copy pattern $P_2$ $\ell$ times on vertices $u_{12k+46},$ $u_{12k+47},
\dots, \, u_n$. From the proof of Theorem \ref{THM C15 even}, we know that there is no collision in the defined labeling between any pair of vertices among $u_{46},u_{47},\dots, u_n$. It remains to show that we can combine pattern $P_3$ with patterns $P_1$ and $P_2$ into a feasible labeling of $G$. It suffices to show that there is no collision between two consecutive patterns of these lengths.

We have the following sequences of labels (in the first line, pattern $P_3$ is followed by pattern $P_1$, in the second line, the patterns are switched; in the third line, pattern $P_3$ is followed by pattern $P_2$, in the fourth line, the patterns are switched). Note that the symbol $\vert$ means the boundary between the patterns and we use only first or last fifteen values of $P_3$, since for every $u_i,u_j\in V(G)$ with $\dist_G(u_i,u_j)=3$ we have $|j-i|\leq 15$ or $|j-i|\geq n-15$.

\begin{equation*}
...,10,0,5,11,1,6,12,2,7,13,3,8,14,4,9,\,\, \vert \,\, 0,5,10,1,6,11,2,7,12,3,8,13,\dots,
\end{equation*}

\begin{equation*}
0,5,10,1,6,11,2,7,12,3,8,13, \,\, \vert \,\, 0,5,10,1,6,11,2,7,12,3,8,13,4,9,14,...,
\end{equation*}

\begin{equation*}
...,10,0,5,11,1,6,12,2,7,13,3,8,14,4,9,\,\, \vert \,\, 0,5,10,1,6,11,2,7,12,3,8,13,4,9,
\end{equation*}

\begin{equation*}
0,5,10,1,6,11,2,7,12,3,8,13,4,9, \,\, \vert \,\, 0,5,10,1,6,11,2,7,12,3,8,13,4,9,14,...
\end{equation*}

Obviously, patterns $P_1$ and $P_2$ are subsequences of pattern $P_3$ and hence the same values (labels) are pairwise at appropriate distances. Therefore we infer that there is no collision in the labeling $f$, and hence $f$ is an $L(3,2,1)$-labeling of $G$.
\end{proof}

Using a computer, we tested circulants $C_n(1,5)$ for all odd $n\leq 151$ and we found that no such circulant can be labeled using labels $0,1,2,\dots,13$. Thus we believe that the following conjecture is true.

\begin{conjecture}
Let $n\in \mathbb{N}$, $n\geq 105$ odd, and let $G=C_n(1,5)$. Then

\begin{equation*}
\lambda_{(3,2,1)}(G) = 14. 
\end{equation*}
\end{conjecture}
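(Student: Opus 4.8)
The statement to prove is an equality $\lambda_{(3,2,1)}(G)=14$, so both bounds are needed. The upper bound $\lambda_{(3,2,1)}(G)\le 14$ is exactly the content of the preceding theorem (valid for all odd $n\ge 105$), so the whole burden is the matching lower bound $\lambda_{(3,2,1)}(G)\ge 14$, i.e.\ that no $L(3,2,1)$-labeling of $G$ uses only the labels $0,1,\dots,13$. The first thing I would record is that the subgraph trick behind Proposition \ref{dolni_odhad_S4_5} is powerless here: the map $f(i)=5i\bmod 14$ is an $L(3,2,1)$-labeling of the whole distance graph $G(1,5)$ with span $13$, so \emph{every} finite window of consecutive vertices of $G(1,5)$ is labelable with $\{0,\dots,13\}$, and Theorem \ref{podgraf} yields nothing beyond $13$. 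Hence the obstruction to span $13$ is genuinely cyclic, and the argument must exploit the wrap-around of $C_n$ rather than a fixed forbidden subgraph.

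The plan is a parity argument. In $G=C_n(\{1,5,n-5,n-1\})$ the binding magnitudes, i.e.\ the index-differences realising graph-distance at most $3$, are $\{1,2,3,4,5,6,7,9,10,11,15\}$; in particular every constraint at $u_i$ involves labels within $15$ indices of $i$. I would first prove the key rigidity lemma: \emph{in any span-$13$ labeling $f$ of $G$ one has $f(u_i)\not\equiv f(u_{i+1})\pmod 2$ for all $i$}, i.e.\ consecutive labels always differ by an odd amount. Both families underlying the upper bound display this (the successive differences of $5i\bmod 14$ and of the length-$12$ pattern $0,5,10,1,6,11,2,7,12,3,8,13$ are $5$, $9$ or $13$, all odd), and the lemma asserts it is forced. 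Granting it, the map $i\mapsto f(u_i)\bmod 2$ is a proper $2$-colouring of the cycle $u_0u_1\cdots u_{n-1}u_0$ formed by the $\pm1$-edges; such a colouring exists only when that cycle is even, i.e.\ when $n$ is even. Since $n$ is odd this is a contradiction, so no span-$13$ labeling exists and $\lambda_{(3,2,1)}(G)\ge 14$. Together with the preceding theorem this gives $\lambda_{(3,2,1)}(G)=14$.

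To make the rigidity lemma uniform over all (large) $n$ I would pass to a transfer/automaton formulation. Call a string of $16$ consecutive labels from $\{0,\dots,13\}$ \emph{feasible} if it violates none of the $L(3,2,1)$ constraints arising from the binding magnitudes above, and let $\mathcal T$ be the finite directed graph whose vertices are feasible blocks of length $15$ and whose edges record legal one-step extensions (overlap in $14$ positions). For $n\ge 105$ the seam introduces no distance-$\le 3$ pair beyond the short ones (the long way round a $105$-cycle needs more than three $\pm1,\pm5$ steps), so local distances in $C_n$ coincide with those in $G(1,5)$ and span-$13$ labelings of $C_n$ correspond \emph{exactly} to closed walks of length $n$ in $\mathcal T$. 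The rigidity lemma is then equivalent to the statement that $\mathcal T$ is bipartite, the two sides being distinguished by the parity of the final label of a block: adding a label of opposite parity flips this potential along every edge. Since every closed walk in a bipartite graph has even length, any closed walk of length $n$ forces $n$ even, which is the desired contradiction.

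The main obstacle is precisely the rigidity lemma, equivalently the bipartiteness of $\mathcal T$. Unlike the constructive upper-bound arguments elsewhere in the paper, this requires ruling out \emph{every} span-$13$ labeling containing a same-parity consecutive pair, including possibly aperiodic ones, and it is exactly the point the authors could only confirm by computer for $n\le 101$. I expect the honest route is a finite but delicate case analysis of feasible blocks, showing that a distance-$1$ constraint $|f(u_i)-f(u_{i+1})|\ge 3$ realised by an \emph{even} gap cannot be completed to a feasible window once the distance-$2$ and distance-$3$ constraints from magnitudes $2,4,6,9,10,11,15$ are imposed. Exhibiting the explicit $\mathbb Z/2$-potential $p$ on $\mathcal T$ (rather than a brute-force cycle check) is what would upgrade the computational evidence into a genuine proof and thereby settle the conjecture.
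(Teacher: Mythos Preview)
The statement is a \emph{conjecture} in the paper, not a theorem: the authors prove only the upper bound $\lambda_{(3,2,1)}(G)\le 14$ (the preceding theorem) and support the lower bound solely by a computer check of each odd $n\le 101$. There is no proof in the paper to compare against.

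Your proposal is a strategy, not a proof, and you say so yourself. The entire argument rests on the ``rigidity lemma'' (equivalently, bipartiteness of the transfer graph $\mathcal T$), and you explicitly flag this as ``the main obstacle'' and describe the expected route as ``a finite but delicate case analysis'' that you do not carry out. Everything downstream of that lemma is correct: the correspondence between span-$13$ labelings of $C_n$ and closed walks of length $n$ in $\mathcal T$ is valid for all $n>30$ (your threshold $105$ is overly cautious---the largest binding magnitude is $15$, so $n\ge 31$ already suffices), and bipartiteness would indeed kill all odd closed walks at once. But without establishing bipartiteness---whether by the parity potential you propose or by any other $2$-colouring---nothing has been proved. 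Note also that your rigidity lemma is strictly stronger than what you need: $\mathcal T$ could be bipartite without the bipartition being parity-of-last-label, so tying yourself to that specific potential may make the case analysis harder than necessary.

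What you have produced is a clean reformulation: the conjecture for all large odd $n$ is equivalent to a single finite statement about $\mathcal T$ (no odd cycle), which could in principle be settled by one computation rather than the paper's per-$n$ checks. That is a genuine contribution to understanding the problem, but it is not a proof of the conjecture.
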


We finish this section by summarizing of results for $C_n(1,5)$ proved in Proposition \ref{PROP C15 lower} and Theorems \ref{THM C15 even} and \ref{THM C15 odd}.

\begin{corollary}
Let $n\in \mathbb{N}$, %$n\geq 7$, 
and let $G=C_n(1,5)$. Then
\begin{equation*}
\begin{array}{rl}\lambda_{(3,2,1)}(G) =13 & \mbox{when } n\geq 34 \mbox{ is even}, \\
13\leq \lambda_{(3,2,1)}(G) \leq 14 & \mbox{when } n\geq 105 \mbox{ is odd}. 
\end{array}
\end{equation*}
\end{corollary}

\section{Computer computations}

For our computations, we used a simple backtracking algorithm with some preprocessing to establish some lower bounds, exact values listed in the tables, and also for obtaining most of the patterns for $L(3,2,1)$-labelings. With some small modifications, we also used this algorithm for verification of correctness of some patterns obtained "by hand". 

The main programme takes the following input parameters: integer $n$, set of integers $S$ of the graph $C_n(S)$, and a number of used labels $k$. The set of vertices of $C_n(S)$ is represented by a vector (array) of length $n$. We usually set the label of vertex "1" to 0.  Then the algorithm tries to extend such a labeling to the rest of $G$. Since circulants have a cyclic structure, it is necessary to check possible collisions counting modulo $n$. If the algorithm fails, we observe that $\lambda_{(3,2,1)}(C_n(S))>k$. On the other hand, if the algorithm finds an $L(3,2,1)$-labeling, we try to find a repetition of labels in a labeling in order to obtain a repetitive pattern. For some such experiments (e.g. in the proof of Theorem \ref{THM 13 odd}) we fixed labels of some  vertices (usually first 10 vertices) and then we searched for patterns starting with the given sequence of labels. In addition, we verified the patterns using some small modification of the main algorithm.

All the computations were performed on one core of 4-core CPU \emph {Intel(R) Core(TM) i$7$ $2.3$ GHz} with $16$GB RAM. We implemented two programmes, one in PASCAL and one in JAVA. Both variations - verification of the existence of a feasible labeling with given span, and verification of correctness of a pattern -  are available on the website \\ \centerline{\url{https://www.iti.zcu.cz/Holub/L3,2,1-labelingsofcirculants/}.}

\medskip

In addition, we verified correctness of exact values of $\lambda_{(3,2,1)}$ presented in Tables \ref{table_1_3}, \ref{table 13 odd}, \ref{table_1_4}, \ref{table_1_5_even} and \ref{table_1_5_odd} using a MILP solver. On the above mentioned website, we provide a generator of MILP codes for computing the exact values of $\lambda_{(3,2,1)}(C_n(S))$. The generated code can be downloaded and submitted to a MILP solver; the website also contains a link to the NEOS Server with the Gurobi optimiser. Below we provide the MILP code for $C_n(1,t)$. 

\clearpage 

\begin{algorithm}[H]
\label{alg}
\caption{MILP model for the $L(3,2,1)$-labeling of the circulant graph $C_n(1,t)$}
\begin{algorithmic}[1]

\Require Graph $C_n(1,t)$ with $V=\{0,\dots,n-1\}$
\Ensure Optimal $L(3,2,1)$-labeling

\Statex
\State \textbf{Variables}
\For{$i \in V$}
    \State $f_i \in \mathbb{Z}$
\EndFor
\State $M \in \mathbb{Z}$
\For{$i,j \in V,\ i<j$}
    \State $b_{ij} \in \{0,1\}$
\EndFor

\Statex
\State \textbf{Objective function}
\State Minimize $M$

\Statex
\State \textbf{Feasibility constraints}
\For{$i \in V$}
    \State $f_i \le M$
\EndFor

\For{$i,j \in V,\ i<j$}
    \State $f_j - f_i + 25\, b_{ij} \le 25 - \delta(d(i,j))$
    \State $f_i - f_j - 25\, b_{ij} \le -\delta(d(i,j))$
\EndFor

\Statex
\State \textbf{Constraints on label values}
\For{$i \in V$}
    \State $0 \le f_i \le 25$
\EndFor
\State $0 \le M$
\For{$i,j \in V,\ i<j$}
    \State $0 \le b_{ij} \le 1$
\EndFor

\end{algorithmic}
\end{algorithm}

We use the distance--dependent separation function
\[
\delta(d)=
\begin{cases}
3, & d=1,\\
2, & d=2,\\
1, & d=3,\\
\end{cases}
\]
corresponding to the $L(3,2,1)$--labeling constraints.

\paragraph{Objective function}
The objective function minimizes the maximum label $M$ assigned to the
vertices of the graph. This corresponds to minimizing the span of the
$L(3,2,1)$--labeling and ensures that an optimal labeling with respect
to the maximum label value is obtained.

\paragraph{Feasibility constraints between vertices}
The feasibility of a labeling is enforced by distance--based separation
constraints. For each unordered pair of vertices $(i,j)$, the difference
between their assigned labels must satisfy the $L(3,2,1)$ conditions,
depending on the graph distance $d(i,j)$ in the circulant graph $C_n(1,t)$.
Vertices at distance~$1$ are required to differ by at least~$3$, while
vertices at distance~$2$ must differ by at least~$2$.

\paragraph{Constraints on label values}
Bounds on the label variables restrict each vertex label $f_i$ to lie
between $0$ and the predefined upper bound $\lambda_{\max}=25$. These
constraints reduce the feasible search space and reflect the assumption
that an optimal labeling exists within this range.

\paragraph{Binary variables}
Auxiliary binary variables $b_{ij}$ are introduced to linearize the
absolute value constraints arising from label separation requirements.
Each binary variable determines which of the two corresponding linear
inequalities is active, resulting in a mixed--integer linear programming
formulation.

\bigskip

\bigskip

%\noindent P\v remysl Holub (corresponding author)\\  %Name Surname
%holubpre@kma.zcu.cz\\
%ORCID: \url{https://orcid.org/0000-0001-9528-7769} \bigskip % ORCID identifier (optional)

%\noindent {\small
%\noindent University of West Bohemia\\
%Department of Mathematics\\
%Univerzitn\' i 8, 30100 Plze\v n, Czech Republic
%}\bigskip

%\noindent Martin Kop\v riva\\
%mkopriva@kma.zcu.cz%\\
%ORCID: \url{https://orcid.org/...} 
%\bigskip

%\noindent {\small
%\noindent University of West Bohemia\\
%Department of Mathematics\\
%Univerzitn\' i 8, 30100 Plze\v n, Czech Republic
%}\bigskip

\clearpage 
\section*{Appendix - patterns}
Here we list patterns for sparse circulants mentioned in Tables %\ref{table C3}, \ref{table C12}, 
\ref{table_1_3}, \ref{table 13 odd}, \ref{table_1_4}, \ref{table_1_5_even} and \ref{table_1_5_odd}.
%
%
%
%
%	S=3  a s_2=2 je na konci, za "end document"
%
%
%
%
%

%\clearpage

$\mathbf{C_n(1,3)}$
\scriptsize
\bigskip

\begin{tabular}{|r|r|l|}
\hline
$n$ & $\lambda_{(3,2,1)}$ & pattern \\
\hline 
$7$ & $12$ & 0,6,12,4,10,2,8, \\
$8$ & $15$ & 0,9,2,11,4,13,6,15, \\
$9$ & $16$ & 0,4,8,12,16,2,6,10,14,\\
$10$ & $13$ & 0,3,6,9,12,1,4,7,10,13,\\
$11$ & $20$ & 0,4,8,12,16,2,6,10,18,14,20, \\
$12$ & $11$ & 0,5,10,3,8,1,6,11,4,9,2,7, \\
$13$ & $18$ & 0,3,6,9,12,1,4,15,18,13,7,10,16, \\
$14$ & $13$ & 0,3,6,9,12,1,4,7,10,13,2,5,8,11, \\
$15$ & $14$ & 0,5,10,3,8,13,6,11,1,9,14,4,12,2,7, \\
$16$ & $15$ & 0,3,6,9,12,1,4,7,14,11,2,5,8,13,10,15, \\
$17$ & $16$ & 0,3,6,9,12,1,14,7,10,4,16,13,8,2,5,15,11, \\
$18$ & $13$ & 0,3,6,13,10,1,8,5,12,3,0,7,10,13,2,5,8,11, \\
$19$ & $18$ & 0,3,6,9,12,1,4,7,10,15,18,13,2,5,16,11,14,8,18, \\
$20$ & $13$ & (0,3,6,9,12,1,4,7,10,13)$^2$, \\
$21$ & $16$ & 0,3,6,9,12,15,4,7,1,11,16,13,6,0,9,2,14,4,16,8,11, \\
$22$ & $13$ & 0,3,6,11,8,13,2,5,0,9,12,3,6,1,8,13,10,5,2,7,12,9, \\
$23$ & $15$ & 0,3,8,13,6,11,4,9,2,15,0,13,6,11,4,9,14,7,12,2,10,15,5, \\
$24$ & $11$ & (0,5,10,3,8,1,6,11,4,9,2,7)$^2$, \\
$25$ & $14$ & 0,7,12,3,10,1,6,13,4,9,2,7,12,5,10,0,8,13,3,11,1,6,14,4,9, \\
$26$ & $13$ & 0,3,6,11,8,1,4,13,10,7,0,5,2,11,8,13,6,1,4,9,12,7,2,5,10,13, \\
$27$ & $14$ & 0,3,8,11,6,1,4,9,12,7,14,5,10,0,8,13,3,11,1,6,14,4,9,2,7,12,5, \\
$28$ & $13$ & (0,3,6,9,12,1,4,7,10,13,2,5,8,11)$^2$, \\
$29$ & $14$ & 0,3,8,5,10,1,12,7,14,9,4,11,6,1,8,3,13,5,0,10,2,12,7,14,9,4,11,\\ & & 6,13, \\
$30$ & $13$ & (0,3,6,9,12,1,4,7,10,13)$^3$, \\
$31$ & $15$ & 0,3,6,9,12,1,4,7,10,13,0,3,6,15,8,11,2,5,14,7,12,0,10,15,3,13,\\ & & 1,11,5,8,14, \\
$32$ & $13$ & 0,3,6,9,12,1,4,7,10,13,0,3,8,5,12,1,10,7,4,13,0,9,2,5,12,7,10,\\ & & 1,4,13,8,11, \\
$33$ & $15$ & 0,3,6,9,12,1,4,7,10,13,0,3,8,5,15,1,11,7,4,14,0,12,9,3,15,6,13,\\ & & 10,1,8,5,14,11, \\
$34$ & $13$ & (0,3,6,9,12,1,4,7,10,13)$^3$,2,5,8,11, \\
$35$ & $14$ & 0,3,6,9,12,1,4,7,10,13,2,5,8,11,14,0,6,9,3,13,1,11,8,4,14,0,10,\\ & & 6,3,13,1,11,5,8,14, \\
$36$ & $11$ & (0,5,10,3,8,1,6,11,4,9,2,7)$^3$, \\
$37$ & $14$ & 0,3,6,9,12,1,4,7,10,13,2,5,8,11,14,0,6,9,3,13,1,11,8,4,14,2,12,\\ & & 0,10,6,3,13,1,11,5,8,14, \\
$38$ & $13$ & 0,3,6,9,12,1,4,7,10,13,(0,3,6,9,12,1,4,7,10,13,2,5,8,11)$^2$, \\
$39$ & $14$ & 0,3,6,9,12,1,4,7,10,13,0,5,2,9,12,7,14,1,10,4,8,13,0,11,3,9,14,\\ & & 6,12,2,10,4,7,13,1,11,5,8,14, \\
$40$ & $13$ & (0,3,6,9,12,1,4,7,10,13)$^4$, \\
$41$ & $13$ & 0,3,8,11,6,1,4,9,12,7,0,5,10,3,8,13,6,11,1,9,4,7,12,0,10,2,8,\\ & & 13,5,11,3,9,1,6,12,4,10,2,7,13,5, \\
$42$ & $13$ & (0,3,6,9,12,1,4,7,10,13)$^2$,0,3,8,5,12,1,10,7,4,13,0,9,2,5,12,7,\\ & & 10,1,4,13,8,11, \\
$43$ & $14$ & 0,3,6,9,12,1,4,7,10,13,0,3,6,9,14,11,4,7,1,13,10,5,8,0,14,11,3,\\ & & 7,1,9,12,4,14,0,10,6,3,13,1,11,5,8,14,  \\
\hline
\end{tabular}

\begin{tabular}{|r|r|l|}
\hline
$n$ & $\lambda_{(3,2,1)}$ & pattern \\
\hline 
$44$ & $13$ & (0,3,6,9,12,1,4,7,10,13)$^3$,2,5,8,11, \\
$45$ & $14$ & (0,3,6,9,12,1,4,7,10,13)$^2$,2,5,8,11,14,0,6,9,3,13,1,11,8,4,14,\\ & & 0,10,6,3,13,1,11,5,8,14, \\
$46$ & $13$ & (0,3,6,9,12,1,4,7,10,13)$^2$,0,3,6,11,8,1,4,13,10,7,0,5,2,11,8,\\ & & 13,6,1,4,9,12,7,2,5,10,13, \\
$47$ & $14$ & (0,3,6,9,12,1,4,7,10,13)$^2$,2,5,8,11,14,0,6,9,3,13,1,11,8,4,14, \\ & &  2,12,0,10,6,3,13,1,11,5,8,14, \\
$48$ & $11$ & (0,5,10,3,8,1,6,11,4,9,2,7)$^4$, \\
$49$ & $14$ & (0,3,6,9,12,1,4,7,10,13)$^2$,0,5,2,9,12,7,14,1,10,4,8,13,0,11,3, \\ & & 9,14,6,12,2,10,4,7,13,1,11,5,8,14, \\
$50$ & $12$ & 0,3,8,5,10,1,12,7,4,9,0,11,6,3,8,1,10,5,12,7,0,9,4,11,6,1,8,3, \\ & & 10,5,0,7,12,9,4,1,6,11,8,3,0,5,10,7,12,1,4,9,6,11. \\
\hline
\end{tabular}

\normalsize

\bigskip

\mbox{}

\bigskip
$\mathbf{C_n(1,4)}$
\scriptsize
\bigskip

\renewcommand{\baselinestretch}{1.3}

\begin{tabular}{|c|c|l|}
\hline
$n$ & $\lambda_{(3,2,1)}$ & pattern \\
\hline 
$9$ & $16$ & 0,4,8,2,14,10,16,12,6; \\
$10$ & $18$ & 0,4,8,2,6,10,14,18,12,16; \\
$11$ & $20$ & 0,4,8,2,6,10,14,18,12,16,20; \\
$12$ & $16$ & 0,3,6,9,12,15,1,4,7,10,13,16; \\
$13$ & $18$ & 0,3,6,9,12,15,1,4,7,10,18,13,16; \\
$14$ & $19$ & 0,3,6,9,12,15,18,1,4,7,10,13,16,19; \\
$15$ & $16$ & 0,6,12,2,8,14,4,10,16,1,7,13,3,9,15; \\
$16$ & $15$ & 0,5,10,15,3,8,13,1,6,11,4,9,14,2,7,12; \\
$17$ & $16$ & 0,5,10,15,3,8,13,1,6,11,16,4,9,14,2,7,12; \\
$18$ & $17$ & 0,5,10,15,3,8,13,1,17,11,6,9,14,4,16,12,2,7; \\
$19$ & $18$ & 0,3,6,15,12,17,9,1,4,7,14,11,16,2,5,8,13,18,10; \\
$20$ & $18$ & (0,4,8,2,6,10,14,18,12,16)$^2$; \\
$21$ & $20$ & 0,7,14,4,11,18,1,8,15,5,12,19,2,9,16,6,13,20,3,10,17; \\
$22$ & $20$ & (0,8,16,2,10,18,4,12,20,6,14)$^2$; \\
$23$ & $18$ & 0,3,6,9,16,13,1,4,11,18,8,14,6,0,16,2,9,4,13,7,15,11,18; \\
$24$ & $18$ & 0,3,6,10,13,8,15,1,18,5,11,7,3,0,17,13,10,15,8,2,5,18,12,16; \\
$25$ & $18$ & 0,3,6,14,8,18,12,2,16,10,0,5,13,7,17,11,3,15,9,1,18,5,13,16,10; \\
$26$ & $17$ & 0,5,11,3,8,14,1,6,12,17,4,10,15,2,8,13,0,6,11,17,4,9,15,2,7,13; \\
$27$ & $17$ & 0,5,10,16,3,8,14,1,6,12,17,4,10,15,2,8,13,0,6,11,16,4,9,14,2,7,12; \\
$28$ & $17$ & 0,3,6,9,16,13,1,4,7,10,17,14,2,5,12,0,9,16,3,7,13,1,10,17,5,8,15,12; \\
$29$ & $17$ & 0,5,11,3,8,14,1,6,12,17,4,9,15,0,7,13,3,11,16,1,6,14,9,4,12,17,2,7,15; \\
$30$ & $16$ & (0,6,12,2,8,14,4,10,16,1,7,13,3,9,15)$^2$; \\
$31$ & $16$ &  0,5,10,15,3,8,13,1,6,11,4,9,14,2,7,12,0,5,10,3,8,13,1,6,11,16,4,9,2,7,12; \\
$32$ & $15$ & (0,5,10,15,3,8,13,1,6,11,4,9,14,2,7,12)$^2$; \\
$33$ & $16$ & 0,5,10,15,3,8,13,1,6,11,4,9,14,2,7,12,0,5,10,15,3,8,13,1,6,11,16,4,9,14,\\ && 2,7,12; \\
$34$ & $16$ & (0,5,10,15,3,8,13,1,6,11,16,4,9,14,2,7,12)$^2$;\\
$35$ & $17$ & 0,5,10,15,3,8,13,1,6,11,16,4,9,14,2,7,12,0,5,10,15,3,8,13,1,17,11,6,9,14,\\
&& 4,16,12,2,7 \\
$36$ & $17$ & (0,5,10,15,3,8,13,1,17,11,6,9,14,4,16,12,2,7)$^2$; \\
$37$ & $18$ & 0,3,6,15,12,17,9,1,4,7,14,11,16,2,5,8,13,10,0,3,6,15,12,17,9,1,4,7,14,11,\\
&& 16,2,5,8,13,18,10; \\
$38$ & $17$ & 0,3,6,9,12,15,1,4,17,8,11,14,6,2,16,0,10,13,5,8,3,17,1,12,15,6,9,4,0,11,\\
&& 14,17,8,2,5,10,13,16; \\
$39$ & $17$ & 0,3,6,16,8,13,10,2,5,17,0,7,15,11,3,13,1,8,16,10,4,14,6,2,12,17,0,8,15,10,\\
&& 3,13,5,1,7,17,9,14,11; \\
$40$ & $17$ & 0,3,8,5,12,15,1,10,17,6,13,3,8,11,16,0,14,2,7,12,4,17,10,15,1,8,3,6,11,14,\\
&& 0,9,16,5,12,2,7,10,17,14; \\
\hline
\end{tabular}

\clearpage
\begin{tabular}{|c|c|l|}
\hline
$n$ & $\lambda_{(3,2,1)}$ & pattern \\
\hline
$41$ & $17$ & 0,3,6,11,14,17,9,2,7,12,15,5,10,0,3,8,17,14,6,1,4,9,16,13,7,2,11,5,0,17,14,\\
&& 9,3,12,7,1,15,5,10,13,8; \\
$42$ & $16$ & 0,5,10,3,16,8,1,14,6,12,4,9,2,15,7,0,13,5,11,3,16,9,1,14,7,12,4,10,2,15,8,\\
&& 0,13,6,11,16,4,9,14,2,7,12; \\
$43$ & $17$ & 0,3,6,11,14,17,9,4,7,12,1,16,10,5,14,8,3,0,17,12,6,15,10,4,1,8,13,16,11,5,\\
&& 0,9,14,17,3,12,7,1,15,5,10,13,8; \\
$44$ & $17$ & 0,3,6,9,16,13,1,4,7,10,17,14,2,5,12,0,9,16,3,6,13,1,8,15,4,10,17,12,0,6,14,\\
&& 2,9,16,4,7,13,1,10,17,5,8,15,12; \\
$45$ & $16$ & (0,6,12,2,8,14,4,10,16,1,7,13,3,9,15)$^3$; \\
$46$ & $16$ & 0,5,10,15,3,8,13,1,6,11,4,9,14,2,7,12,0,5,10,3,8,13,1,6,11,16,4,9,2,7,12,\\
&& 0,5,10,15,3,8,1,6,11,14,4,9,16,2,7; \\
$47$ & $16$ & (0,5,10,15,3,8,13,1,6,11,4,9,14,2,7,12)$^2$, 0,5,10,3,8,13,1,6,11,16,4,9,2,7,12; \\
$48$ & $15$ & (0,5,10,15,3,8,13,1,6,11,4,9,14,2,7,12)$^3$; \\
$49$ & $16$ & (0,5,10,15,3,8,13,1,6,11,4,9,14,2,7,12)$^2$, 0,5,10,15,3,8,13,1,6,11,16,4,9,\\
&& 14,2,7,12; \\
$50$ & $16$ & 0,5,10,15,3,8,13,1,6,11,4,9,14,2,7,12,\\ &&(0,5,10,15,3,8,13,1,6,11,16,4,9,14,2,7,12)$^2$; \\
$51$ & $16$ & (0,5,10,15,3,8,13,1,6,11,16,4,9,14,2,7,12)$^3$; \\
$52$ & $17$ & (0,5,11,3,8,14,1,6,12,17,4,10,15,2,8,13,0,6,11,17,4,9,15,2,7,13)$^2$; \\
$53$ & $17$ & 0,5,10,15,3,8,13,1,6,11,16,4,9,14,2,7,12,\\ && (0,5,10,15,3,8,13,1,17,11,6,9,14,4,16,12,2,7)$^2$; \\
$54$ & $17$ & (0,5,10,16,3,8,14,1,6,12,17,4,10,15,2,8,13,0,6,11,16,4,9,14,2,7,12)$^2$; \\
$55$ & $17$ & 0,5,11,3,8,14,1,6,12,17,4,10,15,2,8,13,0,6,11,17,4,9,15,2,7,13,0,5,11,3,8,\\
&& 14,1,6,12,17,4,9,15,0,7,13,3,11,16,1,6,14,9,4,12,17,2,7,15; \\
$56$ & $17$ & (0,3,6,9,16,13,1,4,7,10,17,14,2,5,12,0,9,16,3,7,13,1,10,17,5,8,15,12)$^2$. \\
\hline
\end{tabular}
\normalsize

\bigskip

\mbox{}

\bigskip

$\mathbf{C_n(1,5)}$
\scriptsize
\bigskip

\renewcommand{\baselinestretch}{1.3} 

\begin{tabular}{|c|c|l|}
\hline
$n$ & $\lambda_{(3,2,1)}$ & pattern \\
\hline 
$11$ & $15$ & 0,3,6,1,4,7,10,15,12,9,14; \\
$12$ & $13$ & 0,5,10,1,6,11,2,7,12,3,8,13; \\
$13$ & $24$ & 0,4,8,2,6,10,14,18,12,22,16,20,24; \\
$14$ & $13$ & 0,5,10,1,6,11,2,7,12,3,8,13,4,9; \\
$15$ & $14$ & 0,3,6,9,2,5,8,11,14,7,10,13,1,4,12; \\
$16$ & $15$ & 0,3,6,1,8,11,14,9,4,13,2,7,12,15,10,5; \\
$17$ & $16$ & 0,6,12,1,7,13,2,8,14,3,9,15,4,10,16,5,11; \\
$18$ & $17$ & 0,3,6,1,4,7,10,13,8,11,14,17,2,5,16,9,12,15; \\
$19$ & $18$ & 0,3,6,9,2,5,8,1,4,12,15,18,11,14,17,10,13,16,7; \\
$20$ & $13$ & 0,3,8,1,4,9,12,5,10,13,2,7,0,3,8,5,10,13,6,11; \\
$21$ & $20$ & 0,13,19,1,7,20,12,8,14,3,9,15,4,10,16,5,11,17,6,12,18; \\
$22$ & $13$ & 0,3,6,11,2,7,10,13,0,5,12,1,4,7,10,3,6,9,12,1,8,13; \\
$23$ & $22$ & 0,8,16,1,9,17,2,10,18,3,11,19,4,12,20,5,13,21,6,14,22,7,15; \\
$24$ & $13$ & (0,5,10,1,6,11,2,7,12,3,8,13)$^2$; \\
$25$ & $16$ & 0,5,11,16,4,14,2,7,13,1,10,16,4,9,15,7,12,0,6,11,3,9,14,2,8; \\
$26$ & $13$ & 0,5,10,1,6,11,2,7,12,3,8,13,0,5,10,1,6,11,2,7,12,3,8,13,4,9; \\
$27$ & $17$ & 0,3,6,9,2,5,8,11,0,7,10,13,16,4,12,17,1,7,10,15,3,11,14,17,1,13,16; \\
$28$ & $13$ & (0,5,10,1,6,11,2,7,12,3,8,13,4,9)$^2$; \\
$29$ & $16$ & 0,5,11,3,16,9,2,15,8,1,14,7,0,13,5,11,16,4,10,15,3,9,14,2,7,13,1,6,12; \\
$30$ & $13$ & 0,3,6,13,2,5,8,11,0,7,10,13,2,5,12,3,0,7,10,1,8,5,12,3,6,13,10,1,8,11; \\

\hline
\end{tabular}

\clearpage
\begin{tabular}{|c|c|l|}
\hline
$n$ & $\lambda_{(3,2,1)}$ & pattern \\
\hline
$31$ & $15$ &  0,5,11,1,6,12,2,7,13,3,8,14,4,9,15,5,10,0,6,11,1,7,12,2,8,13,3,9,14,4,10; \\
$32$ & $14$ & 0,3,6,1,12,5,10,13,4,7,14,1,8,11,2,9,12,3,6,13,0,5,10,1,4,7,2,13,8,11,14,9; \\
$33$ & $16$ & 0,3,6,9,2,5,8,15,12,7,14,11,4,0,10,3,16,9,14,5,8,0,12,2,16,11,14,6,10,13,\\ && 1,4,16; \\
$34$ & $13$ & 0,5,10,1,6,3,8,13,4,9,12,1,6,11,0,5,10,3,8,13,2,7,12,1,4,9,0,5,10,7,12,3,\\ && 8,13;\\
$35$ & $16$ & 0,3,6,9,2,5,15,11,0,16,13,8,2,14,7,3,0,16,5,1,11,14,7,10,13,8,5,12,3,6,16,\\ && 10,1,15,11;\\
$36$ & $13$ & (0,5,10,1,6,11,2,7,12,3,8,13)$^3$; \\
$37$ & $16$ & 0,3,6,1,4,7,10,16,12,9,2,13,4,7,14,11,6,0,10,5,16,9,14,2,12,7,1,11,16,0,3,\\ && 13,5,8,14,10,16; \\
$38$ & $13$ & (0,5,10,1,6,11,2,7,12,3,8,13)$^2$,0,5,10,1,6,11,2,7,12,3,8,13,4,9; \\
$39$ & $16$ & 0,3,6,9,2,5,8,11,0,7,10,13,2,15,12,1,16,7,10,4,14,9,0,13,16,6,3,11,5,2,10,\\ && 13,16,8,12,15,1,4,14, \\
$40$ & $13$ & 0,5,10,1,6,11,2,7,12,3,8,13,(0,5,10,1,6,11,2,7,12,3,8,13,4,9)$^2$; \\
$41$ & $16$ & (0,6,12,1,7,13,2,8,14,3,9,15)$^3$,4,10,16,5,11; \\
$42$ & $13$ & (0,5,10,1,6,11,2,7,12,3,8,13,4,9)$^3$; \\
$43$ & $15$ & 0,3,8,11,4,9,14,5,0,13,2,7,12,3,8,15,4,9,14,5,10,13,6,1,12,7,0,11,8,\\
&& 15,3,9,14,4,10,13,5,2,12,6,1,15,7; \\
$44$ & $13$ & (0,3,6,11,2,7,10,13,0,5,12,1,4,7,10,3,6,9,12,1,8,13)$^2$; \\
$45$ & $14$ & 0,5,10,1,6,11,2,7,12,3,8,13,4,9,14,5,10,0,6,11,1,7,12,2,8,13,3,9,14,\\
&& 4,10,0,5,11,1,6,12,2,7,13,3,8,14,4,9. \\
\hline
\end{tabular}
\normalsize

\end{document}